\newtheorem{theorem}{\bf Theorem}[section]
\newtheorem{proposition}[theorem]{\bf Proposition}
\newtheorem{corollary}[theorem]{\bf Corollary}
\newtheorem{lemma}[theorem]{\bf Lemma}
\newtheorem{prp}[theorem]{\bf Proposition}
\theoremstyle{definition}
\newtheorem{definition}[theorem]{\bf Definition}
\newtheorem{remark}[theorem]{\bf Remark}
\newtheorem{example}[theorem]{\bf Example}
\newcommand{\ba}{\begin{array}}
\newcommand{\ea}{\end{array}}
\def \R{{\mathbb R}}
\def \C{{\mathbb C}}
\def \I{{\mathcal I}}
\def \x{{\mathbf{x}}}
\def \e{{\mathbf{e}}}
\def \H{{\mathbb H}}
\def \N{{\mathbb N}}
\def \Delta{\triangle}
\def \diag{\mathrm{diag}}
\def \rank{\mathrm{rank}}
\def \rank{\mathrm{rank}}
\def \diag{\mathrm{Diag}}
\def \Im{\mathsf{Im}}
\def \herm{\mathsf{Herm}}
\def \x{{\mathbf{x}}}
\def \z{{\mathbf{z}}}
\newenvironment{ex}{\begin{example}\rm}{\hfill$\Box$\end{example}}
\DeclareMathOperator{\inter}{int} 
\DeclareMathOperator{\GL}{GL} 
\DeclareMathOperator{\init}{in}
\DeclareMathOperator{\cl}{cl}
\DeclareMathOperator{\myspan}{span}
\DeclareMathOperator{\cone}{cone}
\DeclareMathOperator{\SO}{SO} 
\newcommand{\sym}{\mathcal{S}}
\newcommand{\compl}{\mathsf{c}}
\newcommand{\psdtext}{\mathrm{psd}}
\title{Conic stability of polynomials and positive maps}
\author{Papri Dey}
\author{Stephan Gardoll}
\author{Thorsten Theobald}
\address{P.~Dey: Department of Mathematics, University of Missouri,
  Columbia, MO, USA}
\address{S.~Gardoll and T.Theobald: Goethe-Universit\"at, FB 12 -- Institut f\"ur Mathematik,
  Postfach 11 19 32, 60054 Frankfurt am Main, Germany}
\begin{document}

\begin{abstract}
Given a proper cone $K \subseteq \R^n$, a multivariate polynomial
$f \in \C[\mathbf{z}] = \C[z_1, \ldots, z_n]$ is called $K$-stable 
if it does not have a root whose vector of the
imaginary parts is contained in the interior
of $K$. If $K$ is the non-negative orthant, then $K$-stability 
specializes to the usual notion of stability of polynomials.

We study conditions and certificates for the $K$-stability of a 
given polynomial $f$, especially for the case of determinantal 
polynomials as well as for quadratic
polynomials. A particular focus is on psd-stability.
For cones $K$ with a spectrahedral representation, we construct
a semidefinite feasibility problem, which, in the case of feasibility, 
certifies $K$-stability of $f$. This reduction to a semidefinite 
problem builds upon techniques from the connection
of containment of spectrahedra and positive maps.

In the case of psd-stability, if the criterion is satisfied, we can 
explicitly construct a determinantal representation of the given 
polynomial. We also show that under certain conditions, for a $K$-stable
polynomial $f$, the criterion is at least fulfilled for some scaled version
of $K$.
\end{abstract}

\maketitle
\section{Introduction}

Recently, there has been wide-spread research interest in stable polynomials and the
geometry of polynomials, accompanied by a variety of new connections to other
branches of mathematics
(including combinatorics \cite{braenden-hpp}, 
differential equations \cite{borcea-braenden-2010},
optimization \cite{straszak-vishnoi-2017},
probability theory \cite{bbl-2009},
applied algebraic geometry \cite{volcic-2019},
theoretical computer science \cite{mss-interlacing1, mss-interlacing2}
and statistical physics \cite{borcea-braenden-leeyang1}). 
See also the surveys of
Pemantle \cite{pemantle-2012} and Wagner \cite{wagner-2010}.
Stable polynomials are strongly linked to matroid theory \cite{braenden-hpp}, 
as delta-matroids arise from support sets of stable polynomials.

In this paper, we concentrate on the generalized notion of $K$-stability as introduced 
in \cite{joergens-theobald-conic}. Given a proper cone $K \subseteq \R^n$,
a polynomial $f \in \C[\mathbf{z}] = \C[z_1, \ldots, z_n]$ is called \emph{$K$-stable} if
$\mathcal{I}(f)\cap \inter K=\emptyset$, where $\inter K$ is the interior of $K$ and 
$\I(f)$ denotes the imaginary projection of $f$ (as formally defined in
Section~\ref{se:prelim}).
Note that $(\R_{\ge 0})^n$-stability coincides with the usual stability,
and stability with respect to the positive semidefinite cone on the
space of symmetric matrices is denoted as \emph{psd-stability}.
In the case of a homogeneous polynomial, $K$-stability of $f$ is 
equivalent to the containment of $\inter K$ in a hyperbolicity cone of $f$
(see Section~\ref{se:prelim}), which also provides a link to
hyperbolic programming.

Here, we study conditions and certificates
for the $K$-stability of a given polynomial $f \in \C[\mathbf{z}]$,
especially for the case of determinantal polynomials of the form
$f(\mathbf{z}) \ = \ \det(A_0 + A_1 z_1 + \cdots + A_n z_n)$
with symmetric or Hermitian matrices $A_0, \ldots, A_n$ as well as for
quadratic polynomials. A particular focus is on psd-stability.

Specifically, for cones $K$ with a spectrahedral representation we construct
a semidefinite feasibility problem, which, in the case of non-emptiness,
certifies $K$-stability of $f$. This reduction to a semidefinite problem
builds upon two ingredients.
Firstly, we characterize certain conic components in the complement of
the imaginary projection of the (not necessarily homogeneous) polynomial
$f$. Secondly, the sufficient criterion employs techniques
from \cite{ktt-2013} on containment problems of spectrahedra and 
positive maps in order to check whether
$\inter K \subseteq \mathcal{I}(f)^{\compl}$. 
For the special case of usual stability, we will recover the well-known
determinantal stability criterion of Borcea and Br\"anden
(see Proposition~\ref{pr:crit-stable} and
Remark~\ref{rem:usual-stability}) and thus obtain, as a byproduct,
an alternative proof of that statement.

In the case of psd-stability, if the sufficient criterion is satisfied, we can 
explicitly construct a determinantal representation of the given polynomial,
see Corollary~\ref{co:quadr-certificate}.
To this end, the determinantal criterion for psd-stability 
from \cite{joergens-theobald-conic} can be seen as a special case of 
our more general results.
The procedure enables to check and certify the conic stability for a 
large subclass of polynomials.

Moreover, we show that under certain preconditions, 
there always exists a positive scaling factor such that the sufficient
criterion applies to a scaled version of the polynomial (or, 
equivalently, a scaled version of the cone). See Theorem~\ref{th:nu-cont}.

The paper is structured as follows.
Section~\ref{se:prelim} provides relevant background on imaginary projections,
conic stability and determinantal representations.
In Section~\ref{se:coniccomponents}, we study the conic components in the
complement of the imaginary projection for the relevant classes of polynomials.
Section~\ref{se:conicstab-posmaps} develops the sufficient
criterion for $K$-stability based on the techniques from positive maps.
The scaling result is contained in Section~\ref{se:scaled}, and 
Section~\ref{se:conclusion} concludes the paper.

\medskip

\noindent
{\bf Acknowledgments.}
Part of this work was done while the first and the third author were visiting 
the Simons Institute for the Theory of Computing within the
semester program ``Geometry of Polynomials''. They are grateful for the
inspiring atmosphere there.
Thanks to Bernd Sturmfels for encouraging us to work jointly on this topic
and to the anonymous referees for very valuable comments.

The first author would like to gratefully acknowledge the financial support 
through a Simons-Berkeley postdoctoral fellowship and the third author 
the support through DFG grant TH 1333/7-1.

\section{Preliminaries\label{se:prelim}}

Throughout the text,
bold letters will denote $n$-dimensional vectors unless noted otherwise.

\subsection{Imaginary projections and conic stability\label{se:prelim-conicstab}}

For a polynomial $f \in \C[\mathbf{z}]$, define its \emph{imaginary projection} 
$\mathcal{I}(f)$ as the projection of the variety of $f$ onto its 
imaginary part, i.e.,
\begin{equation}
\I(f ) = \{\Im(\z) = (\Im(z_{1}), \dots,\Im(z_{n})) : f(\z) = 0\},
\end{equation}
where $\Im(\cdot)$ denotes the imaginary part of a complex 
number \cite{jtw-2019}.

Let $\sym_d, \sym_d^{+}$ and $\sym_{d}^{++}$ denote the set of symmetric $d \times d$ 
matrices as well as the subsets of positive semidefinite and positive definite matrices. 
Moreover, let $\herm_d$ be the set of all Hermitian $d \times d$-matrices.

We consider the following generalization of stability.
Let $K$ be a \emph{proper} cone in $\R^n$, that is, a full-dimensional, closed 
and pointed convex cone in $\R^n$.

\begin{definition}\label{de:kstable}
        A polynomial $f \in \C[\mathbf{z}]$ is called $K$-\emph{stable}, if $f(\mathbf{z})\neq0$ whenever $\Im(\mathbf{z})\in \inter K$.

  If $f \in \C[Z]$ on the symmetric matrix variables
  $Z = (z_{ij})_{n \times n}$ is $\sym_n^+$-stable, then
  $f$ is called \emph{positive semidefinite-stable} (for short, \emph{psd-stable}).
\end{definition}

A stable or $K$-stable polynomial with real coefficients is called 
\emph{real stable} or \emph{real} $K$-\emph{stable}, respectively.

\begin{remark}
1. A set of the form $\R^n + iC$, where $C$ is an open convex cone, is called
a \emph{Siegel domain (of the first kind)}. Siegel domains provide an important concept
in function theory of several complex variables and harmonic analysis, 
see the books \cite{hoermander-1990, piaetetski-shapiro-1969,stein-weiss-1971}.

2. The \emph{Siegel upper half-space} (or \emph{Siegel upper half-plane}) $\mathcal{H}_g$
of  degree $g$ (or genus $g$) is defined as
\[
  \mathcal{H}_g \ =  \{ A \in \C^{g \times g} \text{ symmetric } \, : \, \Im(A) \text{ is positive definite}
   \} \, ,
\]
where $\Im(A) = (\Im(a_{ij}))_{g \times g}$
(see, e.g., \cite[\S{2}]{van-der-geer-2008}).
The Siegel upper half-space occurs in algebraic
geometry and number theory as the domain of modular forms.
Using that notation, psd-stability can be viewed as stability with respect
to the Siegel upper half-space.
\end{remark}

A form (i.e., a homogeneous polynomial) $f \in \R[\z]$ is \emph{hyperbolic} in direction $\e \in \R^{n}$ if $f(\e)\neq 0$ and for every $\x \in \R^{n}$ the univariate polynomial $t \mapsto f(\x+t \e)$ has only real roots. 
The cone $C(\mathbf{e})=\{\mathbf{x}\in\R^n:f(\mathbf{x}+t\mathbf{e})=0 \Rightarrow t<0\}$ is called the \emph{hyperbolicity cone} of $f$ with respect to $\mathbf{e}$. This cone $C(\mathbf{e})$ is convex,
$f$ is hyperbolic with respect to every point $\mathbf{e}' \in C(\mathbf{e})$
and
$C(\mathbf{e})=C(\mathbf{e}')$ (see \cite{garding-59}).

Let $f$ be a hyperbolic polynomial and $C(\e)$ denote the hyperbolicity cone 
containing $\e$. 
By definition of $K$-stability, a homogeneous polynomial $f$ is hyperbolic w.r.t.\ every 
point $\mathbf{e}' \in C(\e)$ if and only if $f$ is $(\cl C(\e))$-stable,
where $\cl$ denotes the topological closure of a set.
The following theorem in \cite{joergens-theobald-conic} 
reveals the connection between $K$-stable polynomials and hyperbolic polynomials.
\begin{theorem} \label{ThmConicStability}
 For a homogeneous polynomial $f \in \R[\z]$, 
 the following are equivalent.
 \begin{enumerate}
  \item $f$ is $K$-stable.
  \item $\I(f) \cap \inter K = \emptyset$.
  \item $f$ is hyperbolic w.r.t.\ every point in $\inter K$.
 \end{enumerate}
\end{theorem}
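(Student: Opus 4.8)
The plan is to prove the equivalence of the three statements by a cyclic chain of implications, exploiting the definitions and the structural properties of hyperbolicity cones recorded in the paragraphs above. The equivalence $(1) \Leftrightarrow (2)$ is essentially immediate from the definitions: by Definition~\ref{de:kstable}, $f$ is $K$-stable precisely when $f(\z) \neq 0$ for every $\z$ with $\Im(\z) \in \inter K$, which says exactly that no point of $\inter K$ arises as the imaginary part of a root of $f$, i.e. $\I(f) \cap \inter K = \emptyset$. So the substance of the theorem lies in connecting these conditions with hyperbolicity, namely $(2) \Leftrightarrow (3)$.

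For the harder equivalence I would first establish $(3) \Rightarrow (2)$ in contrapositive form. Suppose $\I(f) \cap \inter K \neq \emptyset$, so there is a point $\e' \in \inter K$ and a root $\z = \x + i\e'$ of $f$ with real part $\x \in \R^n$. Since $f$ is a real form, I would use this to exhibit a point $\e' \in \inter K$ in whose direction $f$ fails to be hyperbolic: the univariate polynomial $t \mapsto f(\x - t\,\e' \cdot \text{(suitable rescaling)})$ would be shown to possess a genuinely non-real root, contradicting hyperbolicity w.r.t.\ $\e'$. The key computation here is to translate the complex root $\x + i\e'$ into a non-real root of the restricted univariate polynomial $t \mapsto f(\x + t\,\e')$, which, by homogeneity and the fact that $\e' \in \R^n$, has $t = i$ (up to scaling) among its roots; hence $f$ is not hyperbolic in direction $\e'$.

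Conversely, for $(2) \Rightarrow (3)$, I would assume $\I(f) \cap \inter K = \emptyset$ and fix an arbitrary $\e' \in \inter K$; the goal is to show that $t \mapsto f(\x + t\,\e')$ has only real roots for every $\x \in \R^n$. Suppose it had a non-real root $t_0 = a + ib$ with $b \neq 0$. Writing $\x + t_0 \e' = (\x + a\e') + i\,(b\,\e')$, the point $\x + t_0\e'$ is a root of $f$ whose imaginary part is $b\,\e'$. Since $\e' \in \inter K$ and $\inter K$ is a cone invariant under positive scaling, replacing $b$ by $|b|$ and using that $f$ has real coefficients (so roots come in conjugate pairs, guaranteeing a root with imaginary part $|b|\,\e' \in \inter K$), I obtain a point of $\I(f) \cap \inter K$, contradicting $(2)$. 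This establishes hyperbolicity in direction $\e'$, and since $\e'$ was arbitrary, $(3)$ follows.

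The main obstacle I anticipate is the careful bookkeeping of real and imaginary parts, together with the use of the real-coefficient hypothesis to ensure that a non-real root yields an honest point of $\I(f)$ lying in $\inter K$ rather than merely in $-\inter K$; the pointedness of $K$ is what makes these two cases genuinely distinct and must be invoked so that conjugating a root produces an imaginary part of the correct sign. One should also verify that $f(\e') \neq 0$ for $\e' \in \inter K$, which itself follows from $(2)$ since $\e' = \Im(i\e')$ and a vanishing $f(\e')$ would place $\e'$ in $\I(f)$. Beyond these points the argument is a direct unwinding of definitions combined with the convexity and scaling properties of hyperbolicity cones quoted from G\r{a}rding.
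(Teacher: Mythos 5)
Your proof is correct. One thing to be aware of: the paper does not prove Theorem~\ref{ThmConicStability} at all --- it is quoted from \cite{joergens-theobald-conic} --- so there is no in-paper argument to compare yours against; your writeup supplies exactly the argument that the citation stands for. The skeleton is right: $(1) \Leftrightarrow (2)$ is a restatement of Definition~\ref{de:kstable} together with the definition of $\I(f)$, and both directions of $(2) \Leftrightarrow (3)$ come from the dictionary between non-real roots of univariate restrictions and points of $\I(f)$: a root $\x + i\e'$ of $f$ gives the non-real root $t = i$ of $t \mapsto f(\x + t\e')$, and conversely a non-real root $t_0 = a + ib$ of $t \mapsto f(\x + t\e')$ exhibits $b\e' \in \I(f)$, with the conjugate root fixing the sign when $b < 0$. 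Two cosmetic remarks. First, your invocation of the pointedness of $K$ is unnecessary: the conjugation step (or, alternatively, homogeneity, since $f(-\z) = (-1)^{\deg f} f(\z)$ makes $\I(f)$ centrally symmetric) always delivers a root with imaginary part $|b|\e' \in \inter K$, regardless of whether $\inter K$ and $-\inter K$ are disjoint. Second, in the direction $(3) \Rightarrow (2)$ you should separate the degenerate case: if $f(\e') = 0$ then hyperbolicity w.r.t.\ $\e'$ fails by definition, while if $f(\e') \neq 0$ then $t \mapsto f(\x + t\e')$ has leading coefficient $f(\e')$ by homogeneity, hence is not identically zero, so its root $t = i$ genuinely violates real-rootedness; your $f(\e') \neq 0$ check appears only in the converse direction. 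Neither point is a gap in substance.
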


By~\cite{joergens-theobald-hyperbolicity},
the hyperbolicity cones of a homogeneous polynomial
$f$ coincide with the components of $\I(f)^\compl$, where 
$\mathcal{I}(f)^{\compl}$ denotes the complement of $\mathcal{I}(f)$. 
This implies:
\begin{corollary}
 A hyperbolic polynomial $f \in \R[\mathbf{z}]$ is $K$-stable 
 if and only if $\inter K \subseteq C(\e)$ for some hyperbolicity 
 direction $\e$ of $f$. 
 \end{corollary}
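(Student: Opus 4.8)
The plan is to derive both implications from Theorem~\ref{ThmConicStability} together with the structural result of~\cite{joergens-theobald-hyperbolicity}, quoted just above, that the hyperbolicity cones of a homogeneous $f$ are precisely the connected components of $\I(f)^\compl$. Recall that a hyperbolic polynomial is by definition homogeneous, so both prerequisites apply to $f$. The whole argument is then a short piece of topological bookkeeping layered on top of the already available equivalence $K$-stability $\Leftrightarrow \I(f)\cap\inter K=\emptyset$.

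For the implication from right to left, I would suppose $\inter K\subseteq C(\e)$ for some hyperbolicity direction $\e$. Since $C(\e)$ is a connected component of $\I(f)^\compl$, it is in particular disjoint from $\I(f)$, whence $\inter K\cap\I(f)=\emptyset$. Applying the equivalence (1)$\Leftrightarrow$(2) of Theorem~\ref{ThmConicStability} then yields that $f$ is $K$-stable, with no further work needed.

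For the converse, assume $f$ is $K$-stable. By Theorem~\ref{ThmConicStability} this gives $\inter K\cap\I(f)=\emptyset$, i.e.\ $\inter K\subseteq\I(f)^\compl$. The key step is to observe that $\inter K$ is connected: as the interior of a convex cone it is itself convex, hence connected. Because a connected subset of a topological space is always contained in a single connected component of that space, $\inter K$ lies entirely in one component $C$ of $\I(f)^\compl$. By the structural result, $C$ is a hyperbolicity cone, i.e.\ $C=C(\e)$ for a hyperbolicity direction $\e$ (any point of $C$ serves, and since $K$ is proper we may even take $\e\in\inter K$). Then $\inter K\subseteq C(\e)$, as desired.

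The argument has no genuinely hard computational core; the only point needing care is the converse, where one must legitimately pass from ``$\inter K$ avoids $\I(f)$'' to ``$\inter K$ sits inside a single hyperbolicity cone.'' This rests precisely on the two facts that $\inter K$ is connected and that the components of $\I(f)^\compl$ are exactly the hyperbolicity cones; the full-dimensionality of the proper cone $K$ guarantees $\inter K\neq\emptyset$, so that an actual hyperbolicity direction is produced. The remaining thing I would verify is that the cited characterization of hyperbolicity cones as components of $\I(f)^\compl$ is being invoked in the correct regime of real homogeneous $f$, which it is.
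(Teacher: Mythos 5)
Your proof is correct and follows essentially the same route as the paper: the paper's one-line proof also reduces the corollary to the observation that $K$-stability is equivalent to $\inter K \subseteq \mathcal{I}(f)^{\compl}$, combined with the fact (quoted just before the corollary) that the components of $\mathcal{I}(f)^{\compl}$ are exactly the hyperbolicity cones. Your write-up merely makes explicit the connectedness-of-$\inter K$ step that the paper leaves implicit.
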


 \begin{proof} This follows from the observation that a 
   hyperbolic polynomial $f \in \R[\z]$ is $K$-stable 
   if and only if $\inter K \subseteq \I(f)^\compl$.
 \end{proof}

 It is shown in \cite{joergens-theobald-hyperbolicity} that the number of hyperbolicity cones of a homogeneous polynomial $f \in \R[\mathbf{z}]$ is at most $2^d$ for $d \le n$ and at most $2 \sum_{k=0}^{n-1} \binom{d-1}{k}$ for $d > n$.
 
\subsection{Determinantal representations}
A \emph{determinantal polynomial} is a polynomial of the form
$f(\mathbf{z}) = \det(A_0 + \sum_{j=1}^n A_j z_j)$. For our purposes,
we always assume that the matrices $A_0, \ldots, A_n$ are Hermitian unless
stated otherwise. If the constant 
coefficient matrix $A_{0}$ is positive definite or the
identity, then the determinantal polynomial is called \emph{definite} or 
\emph{monic} determinantal polynomial, respectively. Helton, McCullough 
and Vinnikov showed
that every polynomial $p \in \R[\mathbf{z}]$ with $p(0) \neq 0$
has a symmetric determinantal representation of the form
$p(\mathbf{z}) = \det(A_0 + \sum_{j=1}^n A_j z_j)$
with real symmetric matrices $A_0, \ldots, A_n$
(\cite[Theorem 14.1]{hmv-2006}, see also Quarez \cite[Theorem 4.4] {quarez-2012}
and, for the earlier result of a not necessarily symmetric determinantal
representation, Valiant \cite{valiant-1979} and its 
exposition in B\"urgisser et al.\ \cite{bcs-book}).
Note that $A_0$ is not necessarily positive definite and not even 
necessarily positive semidefinite.

In \cite{helton-vinnikov-2007} and \cite{netzer-thom-2012}, it 
was shown that several classes of polynomials have monic determinantal 
representations due to the connection to real zero polynomials. Here,
a polynomial $f\in\R[\mathbf{z}]$ is called \emph{real zero}, 
if the mapping $t\mapsto f(t\cdot \mathbf{z})$ has only real roots.
Br\"and\'{e}n has constructed a real zero polynomial for which $A_0$
cannot be taken to be positive definite in a determinantal
representation \cite{braenden-2011}. 
Recently, Dey and Pillai \cite{dey-pillai-2018} added a complete 
characterization of the quadratic case by also using the connection to 
real zero polynomials.

\begin{prp}[\cite{dey-pillai-2018}]
A quadratic polynomial 
$f(\mathbf{z}) = \mathbf{z}^T A \mathbf{z} + \mathbf{b}^T \mathbf{z} + 1
\in \R[\mathbf{z}]$
is a real zero polynomial if and only if $Q/(1,1) := A - \frac{1}{4} \mathbf{b} \mathbf{b}^T$ is negative
  semidefinite. The polynomial $f(\z)$ has a monic determinantal representation if and only if at least one of the following conditions holds:
\begin{itemize}
\item $A$ is negative semidefinite.
\item $Q/(1,1)$ is negative
  semidefinite and $\rank(Q/(1,1)) \le 3$.
\end{itemize}
\end{prp}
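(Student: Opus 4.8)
The plan is to treat the two assertions separately, starting with the real-zero characterization, which is a direct discriminant computation. For fixed $\mathbf{z}$ the univariate polynomial $t \mapsto f(t\mathbf{z}) = (\mathbf{z}^T A \mathbf{z})\,t^2 + (\mathbf{b}^T \mathbf{z})\,t + 1$ is quadratic (or of lower degree), and I would observe that it has only real roots for every $\mathbf{z}$ precisely when its discriminant $(\mathbf{b}^T \mathbf{z})^2 - 4\,\mathbf{z}^T A \mathbf{z}$ is nonnegative for all $\mathbf{z}$; the degenerate points where $\mathbf{z}^T A \mathbf{z} = 0$ cause no trouble, since the condition is an inequality that need only hold on a dense set. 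Rewriting the discriminant as $\mathbf{z}^T(\mathbf{b}\mathbf{b}^T - 4A)\mathbf{z}$, its nonnegativity for all $\mathbf{z}$ is exactly $4A - \mathbf{b}\mathbf{b}^T \preceq 0$, i.e.\ $Q/(1,1) = A - \tfrac{1}{4}\mathbf{b}\mathbf{b}^T \preceq 0$, which is the claimed criterion.

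For the sufficiency direction of the second assertion I would give two explicit constructions. If $A \preceq 0$, I write $-A = \sum_{k=1}^r \mathbf{c}_k \mathbf{c}_k^T$ and form the arrowhead matrix whose diagonal is $(1 + \mathbf{b}^T\mathbf{z}, 1, \dots, 1)$ and whose first row and column carry the entries $\mathbf{c}_1^T\mathbf{z}, \dots, \mathbf{c}_r^T\mathbf{z}$; expanding its determinant gives $1 + \mathbf{b}^T\mathbf{z} - \sum_k (\mathbf{c}_k^T\mathbf{z})^2 = f(\mathbf{z})$, a monic (even real symmetric) determinantal representation. If instead $Q/(1,1) \preceq 0$ with $\rank(Q/(1,1)) \le 3$, I would use a $2 \times 2$ Hermitian pencil $M(\mathbf{z})$: writing $-Q/(1,1) = \mathbf{a}_1\mathbf{a}_1^T + \mathbf{a}_2\mathbf{a}_2^T + \mathbf{a}_3\mathbf{a}_3^T$, I take the diagonal linear forms $\mathbf{u}^T\mathbf{z}$ and $\mathbf{v}^T\mathbf{z}$ with $\mathbf{u} = \tfrac{1}{2}\mathbf{b} + \mathbf{a}_1$ and $\mathbf{v} = \tfrac{1}{2}\mathbf{b} - \mathbf{a}_1$, and the off-diagonal entry $(\mathbf{a}_2 + i\,\mathbf{a}_3)^T\mathbf{z}$. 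Then $\det(I + M(\mathbf{z})) = (1 + \mathbf{u}^T\mathbf{z})(1 + \mathbf{v}^T\mathbf{z}) - (\mathbf{a}_2^T\mathbf{z})^2 - (\mathbf{a}_3^T\mathbf{z})^2$ has constant term $1$, linear part $\mathbf{b}^T\mathbf{z}$, and quadratic part $\mathbf{z}^T A \mathbf{z}$, as required. It is precisely the imaginary part of the off-diagonal entry that supplies the third negative square, so that rank $3$ becomes attainable rather than only the rank $2$ that real symmetric $2 \times 2$ pencils deliver; this is where the Hermitian convention is essential.

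For necessity, suppose $f = \det(I + M(\mathbf{z}))$ with $M(\mathbf{z}) = \sum_j A_j z_j$ Hermitian. Since the eigenvalues of $M(\mathbf{z})$ are real, the roots of $t \mapsto f(t\mathbf{z})$ are real, so $f$ is real zero and $Q/(1,1) \preceq 0$ by the first part. Next I would exploit that $f$ has degree $2$: expanding $\det(I + M(\mathbf{z})) = \sum_{k \ge 0} e_k(\mathbf{z})$, where $e_k$ is the sum of the principal $k \times k$ minors (homogeneous of degree $k$), degree $2$ forces $e_k \equiv 0$ for all $k \ge 3$. At a fixed point the vanishing of all higher $e_k$ means the characteristic polynomial of $M(\mathbf{z})$ has the form $t^{d-2}(t^2 - e_1 t + e_2)$, so at most two eigenvalues are nonzero and $M(\mathbf{z})$ has rank $\le 2$ for every $\mathbf{z}$; moreover $e_1 = \tr M(\mathbf{z}) = \mathbf{b}^T\mathbf{z}$ and $e_2 = \mathbf{z}^T A \mathbf{z}$ are the sum and the product of the two (possibly zero) eigenvalues $\mu_1(\mathbf{z}), \mu_2(\mathbf{z})$.

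I would then split according to the inertia of the pencil. If for every $\mathbf{z}$ the matrix $M(\mathbf{z})$ has at most one positive and at most one negative eigenvalue, then $\mathbf{z}^T A \mathbf{z} = \mu_1(\mathbf{z})\mu_2(\mathbf{z}) \le 0$ for all $\mathbf{z}$, giving $A \preceq 0$. Otherwise there is a point $\mathbf{z}_0$ at which, after possibly replacing $M$ by $-M$, the matrix $M(\mathbf{z}_0)$ is positive semidefinite of rank exactly $2$; let $C_0$ denote its two-dimensional range. The key claim is that every $M(\mathbf{y})$ is supported on $C_0$, so that the whole pencil lies, up to unitary congruence, in the Hermitian $2 \times 2$ matrices, whence the sufficiency computation read backwards yields $\rank(Q/(1,1)) \le 3$. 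To prove the claim I take the block decomposition of $h(t) = M(\mathbf{z}_0) + t\,M(\mathbf{y})$ with respect to $C_0 \oplus C_0^\perp$: the $C_0$-block of $M(\mathbf{z}_0)$ is positive definite, hence invertible for small $t$, and the requirement $\rank h(t) \le 2$ forces the corresponding Schur complement to vanish identically in $t$; letting $t \to 0$ then forces both the off-diagonal block and the $C_0^\perp$-block of $M(\mathbf{y})$ to vanish. This Schur-complement reduction, together with the degree-forced rank bound, is the technical heart of the argument and the step I expect to demand the most care; by contrast, the inertia dichotomy and the two sufficiency constructions are comparatively routine.
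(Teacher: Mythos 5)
The paper never proves this proposition: it is imported verbatim from \cite{dey-pillai-2018} as background, so there is no internal argument to compare yours against. Your proof is correct and self-contained, and it checks out in detail. The discriminant argument for the real-zero characterization is exactly right (the degenerate locus $\mathbf{z}^T A\mathbf{z}=0$ is indeed harmless). Both sufficiency constructions compute correctly: the arrowhead matrix gives $f$ when $A\preceq 0$, and in the $2\times 2$ Hermitian pencil the quadratic part comes out as $\tfrac14(\mathbf{b}^T\mathbf{z})^2-\sum_{k}(\mathbf{a}_k^T\mathbf{z})^2=\mathbf{z}^TA\mathbf{z}$, with the complex off-diagonal entry genuinely needed to absorb the third rank-one summand. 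The necessity argument is also sound: degree $2$ kills the minor sums $e_k$ for $k\ge 3$, hence $\rank M(\mathbf{z})\le 2$ pointwise; the inertia dichotomy yields either $A\preceq 0$ or a point $\mathbf{z}_0$ where the pencil is definite of rank $2$ on its range $C_0$; your Schur-complement step correctly forces every $M(\mathbf{y})$ to be supported on $C_0$ (invertibility of the $C_0$-block for small $t$, vanishing Schur complement, then $t\to 0$ gives $B_{22}=0$ and $B_{12}=0$); and reading the $2\times 2$ case backwards gives $Q/(1,1)=-\tfrac14(\mathbf{u}-\mathbf{v})(\mathbf{u}-\mathbf{v})^T-\mathbf{a}_2\mathbf{a}_2^T-\mathbf{a}_3\mathbf{a}_3^T$, which is negative semidefinite of rank at most $3$. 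What your route buys is an entirely elementary linear-algebra proof of a statement the paper (and, in structure, also \cite{dey-pillai-2018}, which likewise goes through the real-zero connection) treats as a black box.

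One small slip to fix: in the necessity argument you say ``after possibly replacing $M$ by $-M$''---you cannot do that literally, since $\det(I-M(\mathbf{z}))$ is no longer the given polynomial $f$. Replace $\mathbf{z}_0$ by $-\mathbf{z}_0$ instead (using $M(-\mathbf{z}_0)=-M(\mathbf{z}_0)$), or note that the support claim only concerns ranks of the pencil and is invariant under negating it; either repair is immediate, so this does not affect correctness.
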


\subsection{Real stable polynomials}
 As specified in the Introduction and Section~\ref{se:prelim-conicstab},
 a real polynomial $f$ is real stable if it is real $K$-stable with respect 
 to the non-negative orthant $K = \R_+^n$. This holds true 
 if and only if for every $\e \in \R^{n}_{>0}$ and $\x \in \R^{n}$, the univariate polynomial $t \mapsto f(t\e+\x)$ is real-rooted. Indeed, a particular prominent class of real stable polynomials is generated from determinantal polynomials as follows.
\begin{prp}\normalfont{(\cite[Thm. 2.4]{borcea-braenden-2008})}
\label{pr:crit-stable}
Let $A_1, \ldots, A_n$ be positive semidefinite $d \times d$-matrices and
$A_0$ be a Hermitian $d \times d$-matrix. Then
\[
  f(\mathbf{z}) \ = \ \det(A_0 + \sum_{j=1}^n A_j z_j)
\]
is real stable or the zero polynomial.
\end{prp}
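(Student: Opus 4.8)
The plan is to reduce this multivariate stability statement to an elementary spectral fact about Hermitian pencils and then to remove a nondegeneracy assumption by a perturbation-and-limit argument. First I would record that $f$ has real coefficients: for real $\mathbf{x} \in \R^n$ the matrix $A_0 + \sum_j x_j A_j$ is Hermitian, so its determinant is real, and a polynomial taking real values on all of $\R^n$ has real coefficients. By Definition~\ref{de:kstable} (with $K = \R_+^n$, whose interior is $\R^n_{>0}$) it then suffices to show that, unless $f \equiv 0$, there is no $\mathbf{z} = \mathbf{x} + i\mathbf{y}$ with $\mathbf{y} = \Im(\mathbf{z}) \in \R^n_{>0}$ and $f(\mathbf{z}) = 0$.

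The key lemma I would isolate is: if $H$ is Hermitian and $P$ is Hermitian positive definite, then $\det(H + iP) \neq 0$. Indeed, if $(H + iP)v = 0$ for some $v \neq 0$, then $v^{*} H v + i\, v^{*} P v = 0$; since $v^{*} H v$ and $v^{*} P v$ are both real, each vanishes, contradicting $v^{*} P v > 0$. Now suppose momentarily that every $A_j$ with $j \ge 1$ is positive definite. For $\mathbf{z} = \mathbf{x} + i\mathbf{y}$ with $\mathbf{y} \in \R^n_{>0}$ write $A_0 + \sum_j A_j z_j = H + iP$, where $H = A_0 + \sum_j x_j A_j$ is Hermitian and $P = \sum_j y_j A_j$. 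Since each $A_j$ is positive definite and $y_j > 0$, the matrix $P$ is positive definite, so the lemma gives $f(\mathbf{z}) = \det(H + iP) \neq 0$. Hence in this nondegenerate case $f$ is nonvanishing on the whole domain $D = \{\mathbf{z} : \Im(\mathbf{z}) \in \R^n_{>0}\}$, i.e.\ $f$ is real stable.

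To reach the general positive semidefinite case I would perturb the matrices. For $\epsilon > 0$ set $A_j^{(\epsilon)} = A_j + \epsilon I$ for $j = 1, \dots, n$ (these are positive definite) and $f_\epsilon(\mathbf{z}) = \det(A_0 + \sum_j A_j^{(\epsilon)} z_j)$. By the previous paragraph each $f_\epsilon$ is real stable, and the coefficients of $f_\epsilon$ converge to those of $f$ as $\epsilon \to 0$. Since $D$ is convex, hence connected, Hurwitz's theorem (closedness of the class of stable polynomials under coefficientwise limits, applied to the $f_\epsilon$, which are nowhere zero on $D$) shows that the limit $f$ is either nowhere zero on $D$ or identically zero on $D$, and the latter forces $f \equiv 0$ as a polynomial. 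Combined with the reality of the coefficients noted above, this yields that $f$ is real stable or the zero polynomial.

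The only genuine obstacle is the degenerate situation in which $P = \sum_j y_j A_j$ is singular, where the direct eigenvector argument breaks down; this is exactly what the perturbation handles, at the cost of invoking the closedness of the real stable class to pass to the limit and to account for the possibility that $f$ degenerates to the zero polynomial. A fully self-contained alternative would replace the multivariate Hurwitz step by the univariate characterization quoted just before the statement: fixing $\mathbf{e} \in \R^n_{>0}$ and $\mathbf{x} \in \R^n$, one writes $f(\mathbf{x} + t\mathbf{e}) = \det(B + tC)$ with $B$ Hermitian and $C = \sum_j e_j A_j$ positive semidefinite, diagonalizes $C^{-1/2} B C^{-1/2}$ when $C$ is invertible to exhibit real roots, and again replaces $C$ by $C + \epsilon I$ together with univariate Hurwitz otherwise.
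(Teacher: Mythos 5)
Your proof is correct, but it takes a genuinely different route from the paper. The paper never proves Proposition~\ref{pr:crit-stable} directly: it cites \cite{borcea-braenden-2008} and then recovers the statement as a byproduct of its conic-stability machinery, namely Theorem~\ref{th:cones-determinantal} (every hyperbolicity cone of $\init(f)$ lies in $\mathcal{I}(f)^{\compl}$) combined with the positive-maps/Choi-matrix containment criterion of Theorem~\ref{th:kstable-determinantal}; Remark~\ref{rem:usual-stability} then specializes to $K=\R_+^n$, where condition~\eqref{eq:containment-crit1} forces the certificate $C$ to be block diagonal with $C_{pp}=A_p$, so feasibility holds exactly when $A_1,\ldots,A_n$ are positive semidefinite. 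Your argument instead isolates the elementary lemma that $\det(H+iP)\neq 0$ for $H$ Hermitian and $P\succ 0$, and removes the degeneracy by perturbing $A_j\mapsto A_j+\epsilon I$ and passing to the limit via Hurwitz's theorem; note that your ``self-contained alternative'' at the end (congruence by $\bigl(\sum_j A_j e_j\bigr)^{-1/2}$ to exhibit real roots of $t\mapsto f(\mathbf{x}+t\mathbf{e})$) is essentially the computation inside the paper's proof of Theorem~\ref{th:cones-determinantal}, so the two arguments do share that kernel. What each buys: your proof is shorter, elementary, and covers the general positive semidefinite case outright, whereas the paper's recovery, as stated, carries the extra hypotheses of Theorem~\ref{th:kstable-determinantal} (degree $d$, irreducibility of $\init(f)$, and existence of $\mathbf{e}$ with $\sum_j A_j e_j\succ 0$), which your perturbation-plus-Hurwitz step dispenses with; conversely, the paper's route embeds the criterion into a framework that applies to arbitrary spectrahedral cones $K$ and yields semidefinite certificates, which is the point of that development and is not addressed by your direct argument.
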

It is also known a real polynomial $f \in \R[\mathbf{z}]$ is real stable if and only if the (unique) homogenization polynomial w.r.t.\ the variable $z_{0}$ is hyperbolic w.r.t.\ every vector $\e \in \R^{n+1}$ such that $e_{0}=0$ and $e_{j} > 0$ for all $1 \leq j \leq n$ (see \cite{borcea-braenden-2010}).

\begin{example}
The class of homogeneous stable polynomials is contained in the following class of
Lorentzian polynomials,
see \cite{lorentzian-braenden-huh,lorentzian-hehl-itin}.
Let $f \in \R[\mathbf{z}]$ be homogeneous of degree $d \ge 2$ with only positive coefficients. 
$f$ is called \emph{strictly Lorentzian} if 
\begin{itemize}
\item $d = 2$ and the \textit{Hessian} $\H(f)=(\partial_{i} \partial_{j} f)_{i,j=1}^{n}$ 
 is non-singular and has exactly one positive eigenvalue
 (i.e., $\H(f)$ has the \emph{Lorentzian signature} $(1,n-1)$, which expresses
  that $f$ has one positive eigenvalue and $n-1$ negative eigenvalues
  \cite{lorentzian-hehl-itin}),
\item or $d > 2$ and for all $\alpha \in \N_0^n$ with $|\alpha|=d-2$, the 
  $\alpha$-th derivative $\partial^{\alpha} f$ is strictly Lorentzian.
\end{itemize}
By convention, in degrees 0 and 1, every polynomial with only positive 
coefficients is strictly Lorentzian. Limits of strictly Lorentzian polynomials
are called \emph{Lorentzian}.
\end{example}

Concerning psd-stability, the following variant of
Proposition~\ref{pr:crit-stable} is known.

\begin{proposition}\normalfont{(\cite[Thm. 5.3]{joergens-theobald-conic})}
\label{pr:crit-psd-stable}
Let $A= (A_{ij})_{n \times n}$ be a Hermitian block matrix with $n \times n$ blocks
of size $d \times d$. If $A$ is positive semidefinite and $A_0$ a Hermitian
$d \times d$-matrix, then the polynomial
$f(Z) \ = \ \det(A_0 + \sum_{i,j=1}^n A_{ij} z_{ij})$ on the set
of symmetric $n \times n$-matrices is psd-stable or identically zero.
\end{proposition}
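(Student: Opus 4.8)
The plan is to reduce the psd-stability claim to the ordinary real-stability statement of Proposition~\ref{pr:crit-stable} by exhibiting the polynomial $f(Z) = \det(A_0 + \sum_{i,j} A_{ij} z_{ij})$ as the pullback of a real stable polynomial under a linear substitution that respects stability. The key structural observation is that if the Hermitian block matrix $A = (A_{ij})_{n \times n}$ is positive semidefinite, then each diagonal block $A_{ii}$ is positive semidefinite, but more is true: writing $A = B^* B$ (or using a Cholesky-type factorization of the full block matrix), we can control the cross terms $A_{ij}$ as well. The idea is that the coefficient-of-$z_{ij}$ matrices, when assembled along a symmetric matrix variable $Z = (z_{ij})$, inherit enough positivity from $A \succeq 0$ to force the univariate restrictions to be real-rooted.

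\medskip

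First, I would recall the characterization of psd-stability from Theorem~\ref{ThmConicStability} together with the definition: $f$ is $\sym_n^+$-stable precisely when $f(Z) \neq 0$ for every symmetric matrix variable $Z$ whose imaginary part $\Im(Z)$ lies in the interior of $\sym_n^+$, i.e. whenever $\Im(Z) \succ 0$. So the goal is to show that $A_0 + \sum_{i,j} A_{ij} z_{ij}$ is nonsingular whenever the symmetric matrix $\Im(Z) = (\Im(z_{ij}))$ is positive definite. Writing $z_{ij} = x_{ij} + i y_{ij}$ with the symmetry $z_{ij} = z_{ji}$ and $Y := (y_{ij}) \succ 0$, the matrix in question is $A_0 + \sum_{i,j} A_{ij} x_{ij} + i \sum_{i,j} A_{ij} y_{ij}$. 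Since $A_0$ and the $A_{ij}$ are Hermitian and the $x_{ij}, y_{ij}$ are real, the real part $H := A_0 + \sum_{i,j} A_{ij} x_{ij}$ is Hermitian and the imaginary coefficient $S := \sum_{i,j} A_{ij} y_{ij}$ is Hermitian. Nonsingularity of $H + iS$ is guaranteed once I show $S \succ 0$ (or $\prec 0$), because a matrix of the form $H + iS$ with $H$ Hermitian and $S$ positive definite has no real nonzero kernel: if $(H+iS)v = 0$ then $v^* H v = 0$ and $v^* S v = 0$, forcing $v = 0$ by definiteness of $S$.

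\medskip

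Hence the crux reduces to the linear-algebra fact that $\sum_{i,j=1}^n A_{ij} y_{ij} \succ 0$ whenever $A = (A_{ij}) \succeq 0$ and $Y = (y_{ij}) \succ 0$. This is exactly the statement that the map sending a positive definite $Y$ into $\sum_{i,j} A_{ij} \otimes$-contracted with $A$ preserves positive definiteness, and it follows from a tensor/Schur argument: for a test vector $w \in \C^d$, one computes $w^* \big(\sum_{i,j} A_{ij} y_{ij}\big) w = \sum_{i,j} y_{ij} (w^* A_{ij} w)$, which is the value of the Hermitian form $Y$ paired against the Gram-type Hermitian matrix $M(w) := (w^* A_{ij} w)_{i,j}$. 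Since $A \succeq 0$, the matrix $M(w)$ is positive semidefinite for every $w$ (it is a compression of $A$ by the block vector $I_n \otimes w$), and since $Y \succ 0$ the pairing $\langle Y, M(w)\rangle = \tr(Y M(w))$ is strictly positive unless $M(w) = 0$. I would then argue that $M(w) = 0$ for all $w$ in a neighborhood would force $A = 0$, handling the degenerate case by noting that then $f$ is identically zero. The main obstacle is precisely this last positivity step: establishing that $\langle Y, M(w) \rangle > 0$ for all nonzero $w$ and ruling out the vanishing case cleanly, rather than the reduction itself, which is formal. An alternative, and perhaps cleaner, route would be to factor $A = \sum_k v_k v_k^*$ with block vectors $v_k = (v_k^{(1)}, \ldots, v_k^{(n)})$ and write $A_{ij} = \sum_k v_k^{(i)} (v_k^{(j)})^*$, substitute into $\sum_{i,j} A_{ij} z_{ij}$, and recognize the result as a sum of rank-structured terms whose imaginary part is a positive combination governed by $\Im(Z) \succ 0$; this makes the positivity manifest and simultaneously sets up the determinantal representation used later in the paper.
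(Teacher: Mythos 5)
The paper does not actually prove this proposition --- it quotes it from \cite[Thm.~5.3]{joergens-theobald-conic} --- so your attempt can only be measured against what the statement requires. Up to a point your reduction is sound: writing $Z = X + iY$ with $Y \succ 0$, the matrices $H := A_0 + \sum_{i,j} A_{ij} x_{ij}$ and $S := \sum_{i,j} A_{ij} y_{ij}$ are indeed Hermitian (this uses $A_{ij}^* = A_{ji}$ together with the symmetry of $X$ and $Y$); the compression argument giving $w^* S w = \sum_{i,j} y_{ij}\, w^* A_{ij} w = \mathrm{tr}\big(Y M(w)\big) \ge 0$ with $M(w) := (w^* A_{ij} w)_{i,j} \succeq 0$ is correct; and so is the observation that $H + iS$ is nonsingular whenever $S \succ 0$.

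The genuine gap is your handling of the degenerate case, and as written it is false. The failure of $S \succ 0$ is caused by a \emph{single} nonzero vector $w$ with $M(w) = 0$ (equivalently, since $A \succeq 0$, with $A_{ij} w = 0$ for all $i,j$), not by $M$ vanishing on a neighborhood; and such a $w$ forces neither $A = 0$ nor $f \equiv 0$. Concretely, take $d = 2$, $A_{11} = \mathrm{diag}(1,0)$, all other blocks zero, and $A_0 = I_2$: then $A \succeq 0$, the vector $w = e_2$ annihilates every $A_{ij}$, and $S = \mathrm{diag}(y_{11},0)$ is never positive definite, yet $f(Z) = 1 + z_{11}$ is psd-stable and certainly not identically zero. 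So the dichotomy your argument relies on --- ``either $S \succ 0$ for all $Y \succ 0$, or $f \equiv 0$'' --- is wrong; the correct dichotomy must couple $S$ with $H$. The repair is short: suppose $f(Z) = 0$ for some $Z$ with $Y = \Im(Z) \succ 0$ and pick $0 \neq v \in \ker(H + iS)$. Since $v^* H v$ and $v^* S v$ are real, both vanish; $S \succeq 0$ then gives $S v = 0$ and hence also $H v = 0$. Now $0 = v^* S v = \mathrm{tr}\big(Y M(v)\big)$ with $Y \succ 0$ forces $M(v) = 0$, i.e.\ $A_{ij} v = 0$ for all $i,j$ by your own compression argument, and then $Hv = 0$ collapses to $A_0 v = 0$. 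Consequently $\big(A_0 + \sum_{i,j} A_{ij} z_{ij}\big) v = 0$ identically in $Z$, so $f \equiv 0$. Alternatively one can argue as Borcea--Br\"and\'en do for Proposition~\ref{pr:crit-stable}: replace $A$ by $A + \epsilon I$, so that $S$ becomes $S + \epsilon\, \mathrm{tr}(Y) I \succ 0$, conclude psd-stability of the perturbed polynomial, and let $\epsilon \to 0$ using Hurwitz's theorem, which yields exactly ``psd-stable or identically zero''. Your alternative factorization $A = \sum_k v_k^{} v_k^*$ again only produces $S \succeq 0$ and runs into the same degenerate case, so it does not avoid this issue either.
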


Determinantal representations of complex polynomials which are stable with
respect to the unit ball of symmetric matrices have been studied in 
\cite{gkv-2016,gkv-2017}.

In the present paper,
for cones $K$ with a spectrahedral representation, we derive
a semidefinite problem, which, in the case of feasibility,
certifies $K$-stability of $f$. For the case of psd-stability, if that 
criterion is satisfied, we can explicitly
construct the determinantal representation of 
Proposition~\ref{pr:crit-psd-stable}. In this respect, the criterion from
Proposition~\ref{pr:crit-psd-stable} can be seen as a special case of our
treatment.

The following examples serve to pinpoint some 
relationships between stable, psd-stable and determinantal polynomials.
\begin{example} {a) A quadratic determinantal polynomial
does not need to be stable in order to be psd-stable (with respect to a suitable
ordering identification between the variables $z_i$ and the matrix variables
$z_{jk}$). Namely, the determinantal polynomial 
\begin{equation*}
 f(z_{1},z_{2},z_{3}) =(z_{1}+z_{3})^{2}-z_{2}^{2} 
   = (z_1 + z_3 - z_2)(z_1+z_3 +z_2)
\end{equation*}
is not stable, because $(1,2,1) \in \mathcal{I}(f) \cap \R_{>0}^3$.
However, in the matrix variables 
 $Z = \begin{pmatrix}z_1 & z_2 \\
  z_2 & z_3 \end{pmatrix}$, the polynomial $f(Z) = f(z_1,z_2,z_3)$
  is psd-stable. To see this, observe that by the arithmetic-geometric
  mean inequality, every
  $\mathbf{y} \in \mathcal{I}(f) = \{ \mathbf{y} \in \R^3 \, : \,
  y_1+y_3 = y_2 \text{ or } y_1 + y_3 = - y_2\}$
 satisfies
\[
  \det \begin{pmatrix}
    y_1 & y_2 \\
    y_2 & y_3
  \end{pmatrix}
  \ = \
  \det
  \begin{pmatrix}
    y_1 & \pm (y_1 + y_3) \\
    \pm (y_1 + y_3) & y_3
  \end{pmatrix}
  \ = \ y_1 y_3 - (y_1 + y_3)^2
  \ \le \ 0
\]
and thus
$\mathbf{y} \not\in \inter \sym_2^+$.

\smallskip

\noindent
b) An example of a non-psd-stable determinantal polynomial on $2 \times 2$-matrices,
  i.e., with matrix variables 
  $Z = \begin{pmatrix} z_{11} & z_{12} \\
    z_{12} & z_{22} \end{pmatrix}$,
  is
    $f(Z) = \det \diag(z_{11},z_{12},z_{22}) = 
    z_{11} z_{12} z_{22}$. Namely, since
    $ \mathcal{I}(f) = \{ X \in \sym_2 \ : \ x_{11} x_{12} x_{22} = 0 \}$,
    we have $\begin{pmatrix} 1 & 0 \\ 0 & 1 \end{pmatrix} \in \mathcal{I}(f)$
    and thus $\mathcal{I}(f) \cap \inter \sym_2^+ \neq \emptyset$.
 }
 \smallskip
 
 \noindent
 c) Another example of a non-psd-stable determinantal polynomial on $2\times 2$-matrices is the determinant of the spectrahedral representation of the open Lorentz cone $g(\mathbf{z})=\det\left(\begin{matrix}
 z_1+z_3 & z_2 \\ 
 z_2 & z_1-z_3
 \end{matrix}
 \right)=z_1^2-z_2^2-z_3^2$, where the same variable identification as in a)
 is used. Note that 
$g(\mathbf{z})=0$ for $\mathbf{z}=(1+2i,1+i,\sqrt{-3+2i})$ and $\left(\begin{matrix}
 2 & 1 \\
 1 & \alpha 
 \end{matrix}\right)\in \inter\sym_2^+$ for $\alpha=\Im(\sqrt{-3+2i})>1$. Hence, 
  $g$ is not psd-stable.
 \end{example}

\section{Conic components in the complement of the imaginary projection\label{se:coniccomponents}}

To prepare for the conic stability criteria 
for determinantal and quadratic polynomials, we characterize 
particular conic components in the complement of 
the imaginary projection for these classes. Denote by $X \succ 0$
the positive definiteness of a matrix $X$.

First consider a determinantal polynomial
\begin{equation}
  \label{eq:fdetpoly}
  f(\mathbf{z}) \ = \ \det(A_0 + A_1 z_1 + \cdots + A_n z_n)
\end{equation}
with $A_0, \ldots, A_n \in \herm_d$. Note that if $A_0=I$, then
the homogenization of $f$ w.r.t.\ a variable $z_0$ is hyperbolic w.r.t.\
$\mathbf{e}=(1,0,\dots,0) \in \R^{n+1}$. Moreover, for a homogeneous determinantal 
polynomial $f=\det(\sum_{j=1}^{n}A_jz_j)$, if there exists an $\e \in \R^{n}$ with 
$\sum_{j=1}^n A_je_j \succ  0$, then $f$ is hyperbolic w.r.t.\ $\e$, and the set
\[
  \{ \mathbf{z} \in \R^{n} \, : \ A_1 z_1 + \cdots + A_n z_n \succ 0 \}
\]
as well as its negative are hyperbolicity cones of $f$, see \cite[Prop.~2]{lpr-2005}. 
If $f$ is irreducible, then these are the only two hyperbolicity cones 
(see \cite{kummer-2019}), whereas in the reducible case there can be 
more (cf.\ Section~\ref{se:prelim-conicstab}).
Let $A(\mathbf{z})$ be the linear matrix pencil 
$A(\mathbf{z}) = A_0 + \sum_{j=1}^n A_j z_j$. The \textit{initial form} of $f$, denoted by $\init(f)$,
is defined as $\init(f)(\z)=f_{h}(0,\z)$, where $f_{h}$ is the homogenization of 
$f$ w.r.t.\ the variable $z_0$. 

\begin{theorem}
\label{th:cones-determinantal}
If $f$ is a degree $d$ determinantal polynomial of the form~\eqref{eq:fdetpoly} and there exists an 
$\mathbf{e} \in \R^n$ with $\sum_{j=1}^n A_j e_j \succ 0$,
then $\init(f)$ is hyperbolic and every hyperbolicity cone of $\init(f)$ is 
contained in $\mathcal{I}(f)^\compl$.
\end{theorem}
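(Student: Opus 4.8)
The plan is to make the initial form explicit, read off its hyperbolicity from the hypothesis, and then prove the containment by an inertia argument on the Hermitian pencil $N(\z):=\sum_{j=1}^n A_j z_j$. First I would compute $\init(f)$. Since $\deg f=d$ equals the size of the matrices, homogenizing yields $f_h(z_0,\z)=z_0^{\,d}f(\z/z_0)=\det\bigl(A_0 z_0+N(\z)\bigr)$, which is already homogeneous of degree $d$ and restricts to $f$ at $z_0=1$; hence $\init(f)(\z)=f_h(0,\z)=\det N(\z)$. Because $N(\e)\succ0$ by assumption, \cite[Prop.~2]{lpr-2005} gives that $\init(f)=\det N$ is hyperbolic with respect to $\e$ and that $C_+:=\{\z\in\R^n:N(\z)\succ0\}$ and $C_-:=\{\z\in\R^n:N(\z)\prec0\}$ are hyperbolicity cones. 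This settles the first assertion.

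The heart of the containment is an elementary pencil lemma: if $B\in\herm_d$ and $P\in\herm_d$ with $P\succ0$, then $\det(B\pm iP)\neq0$. Indeed, if $(B\pm iP)\w=0$ for some $\w\neq0$, then $\w^{*}B\w\pm i\,\w^{*}P\w=0$ with both quadratic forms real, forcing $\w^{*}P\w=0$, contrary to $P\succ0$. Now let $\mathbf{v}\in C_+$, so $N(\mathbf{v})\succ0$. For every $\x\in\R^n$ we have $f(\x+i\mathbf{v})=\det\bigl((A_0+N(\x))+i\,N(\mathbf{v})\bigr)$, where $A_0+N(\x)$ is Hermitian and $N(\mathbf{v})\succ0$; by the lemma this is nonzero, so $\mathbf{v}\notin\I(f)$. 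Hence $C_+\subseteq\I(f)^{\compl}$, and applying the lemma with $P=-N(\mathbf{v})$ gives $C_-\subseteq\I(f)^{\compl}$ in the same way.

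It remains to pass from the two definite cones to every hyperbolicity cone of $\init(f)$. On any hyperbolicity cone $C'$ one has $\det N(\mathbf{v})=\init(f)(\mathbf{v})\neq0$, so $N(\mathbf{v})$ stays invertible and its inertia is constant along $C'$; whenever that inertia is $(d,0)$ or $(0,d)$ the argument above applies verbatim. To reach all cones at once I would invoke \cite{kummer-2019}: if $\init(f)$ is irreducible then $C_+$ and $C_-$ are its only hyperbolicity cones, so the previous paragraph is already exhaustive. The step I expect to be the main obstacle is exactly the reducible situation, in which $\init(f)$ may carry additional hyperbolicity cones where $N(\mathbf{v})$ is indefinite; there the pencil lemma fails, since $\det(\text{Hermitian}+i\,\text{indefinite Hermitian})$ can vanish, and controlling $f(\x+i\mathbf{v})$ on such a cone requires genuinely exploiting how $A_0$ couples the factors of $\det N$—which is where the real work lies.
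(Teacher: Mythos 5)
Your computation of $\init(f)=\det\bigl(\sum_{j=1}^n A_j z_j\bigr)$, the hyperbolicity of $\init(f)$, and the containment of the two definite cones $C_\pm$ in $\I(f)^{\compl}$ are all correct, and this part coincides in substance with the paper's own proof: where you use the kernel argument ($(B\pm iP)\w=0$ forces $\w^{*}P\w=0$), the paper conjugates by $N(\e)^{-1/2}$ and observes that $t\mapsto f(\x+t\e)$ is real-rooted because $N(\e)^{-1/2}(A_0+N(\x))N(\e)^{-1/2}$ is Hermitian; the two arguments are interchangeable. Both you and the paper then settle the irreducible case by citing \cite{kummer-2019}, so up to that point you have essentially reproduced the paper's proof.

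The step you flag as the main obstacle --- hyperbolicity cones on which your pencil $N(\mathbf{v})$ is indefinite, which occur only when $\init(f)$ is reducible --- is indeed a gap in your proposal, but your suspicion that ``the real work lies there'' is better founded than you may realize: that part of the statement is false, so the gap cannot be closed by any argument. The paper's proof dismisses the reducible case with the single observation that every hyperbolicity cone of $\prod_{j} h_j$ is an intersection of hyperbolicity cones of the factors $h_j$; this observation is true but does not produce the containment in $\I(f)^{\compl}$, precisely because the factors $h_j$ need not themselves be determinantal of the required form and $A_0$ couples them, exactly the difficulty you describe. Concretely, take
\[
 f(z_1,z_2) \ = \ \det\begin{pmatrix} z_1 & 1 \\ 1 & z_2 \end{pmatrix} \ = \ z_1z_2-1 ,
\]
so that $d=2$, $A_1+A_2=I\succ 0$, and $\init(f)=z_1z_2$ is reducible. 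The open quadrant $Q=\{\y\in\R^2 \, : \, y_1>0,\ y_2<0\}$ is a hyperbolicity cone of $\init(f)$ (with respect to the direction $(1,-1)$), yet the zero $\z=\bigl(1+i,\,\tfrac{1}{2}(1-i)\bigr)$ of $f$ has imaginary part $\bigl(1,-\tfrac{1}{2}\bigr)\in Q$, so $Q\not\subseteq\I(f)^{\compl}$; in fact $\I(f)=\{\y \,:\, -1\le y_1y_2<0\}\cup\{\mathbf{0}\}$, which meets both indefinite quadrants. So your proof establishes exactly the portion of the theorem that is true --- the two definite cones always lie in $\I(f)^{\compl}$, and hence all hyperbolicity cones do whenever $\init(f)$ is irreducible --- and the theorem should be read with that restriction. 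Note that the paper's subsequent use of this result, Theorem~\ref{th:kstable-determinantal}, assumes irreducibility of $\init(f)$ anyway, so nothing downstream is affected.
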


\begin{proof}
Let $f = \det(A_0 + \sum_{j=1}^n A_j z_j)$ with 
$A_0, \ldots, A_n \in \herm_d$. Since $f$ is of degree $d$, it holds
$\init(f) = \det(\sum_{j=1}^n A_jz_j)$. Then
$\sum_{j=1}^n A_j e_j \succ 0$ implies that
$\init(f)$ is hyperbolic.

First we assume that $\init(f)$ is irreducible.
By the precondition $\sum_{j=1}^n A_j e_j \succ 0$,
the initial form $\init(f)$ has exactly the two hyperbolicity cones
$C_1 = \{\mathbf{x} \in \R^n \, : \, \sum_{j=1}^n A_j x_j \succ 0\}$
and $C_2 = \{\mathbf{x} \in \R^n \, : \, \sum_{j=1}^n A_j x_j \prec 0\}$.

First we show that $C_1 \subseteq \mathcal{I}(f)^\compl$. 
For every $\mathbf{x} \in \R^n$, we have
\begin{eqnarray*}
  f(\mathbf{x} + t \mathbf{e}) & = & \det(A_0 +  \sum_{j=1}^n A_j x_j 
    + t \sum_{j=1}^n A_j e_j).
\end{eqnarray*}
Since $\sum_{j=1}^n A_j e_j \succ 0$, we obtain
\begin{eqnarray*}
f(\mathbf{x} + t \mathbf{e}) 
  & = & \det(\sum_{j=1}^n A_j e_j) \det\Big( (\sum_{j=1}^n A_j e_j)^{-1/2}
    (A_0 + \sum_{j=1}^n A_j x_j) (\sum_{j=1}^n A_j e_j)^{-1/2}
    + t I \Big) \, .
\end{eqnarray*}
Since $A_0 + \sum_{j=1}^n A_j x_j$ is Hermitian, all the roots of
$t \mapsto f(\mathbf{x} + t \mathbf{e})$ are real.
Hence, there cannot be a non-real vector $\mathbf{a}+i \mathbf{e}$ with 
$f(\mathbf{a}+i\mathbf{e}) = 0$,
because otherwise setting $\mathbf{x} = \mathbf{a}$ would give a non-real solution
to $t \mapsto f(\mathbf{x}+t\mathbf{e})$.
Thus, there is a connected component $C'$ in $\mathcal{I}(f)^{\compl}$ 
containing $C_1$. The case $C_2 \subseteq \mathcal{I}(f)^\compl$ is symmetric,
since $-\mathbf{e} \in C_2$.

To cover also the case of reducible $\init(f)$, 
it suffices to observe that for reducible $\init(f) = \prod_{j=1}^k h_j$ with
irreducible $h_1, \ldots, h_k$, every hyperbolicity cone $C$ of $\init(f)$
is of the form $C = \bigcap_{j=1}^k C_j$ with some hyperbolicity 
cones $C_j$ of $h_j$, $1 \le j \le k$.
\end{proof}

\subsection*{Quadratic polynomials}
Now let $f \in \R[\mathbf{z}]$ be a quadratic polynomial of the form
\begin{equation}
  \label{eq:fquadr}
  f \ = \ \mathbf{z}^T A \mathbf{z} + \mathbf{b}^T \mathbf{z} + c
\end{equation}
with $A \in \sym_n$, $\mathbf{b} \in \R^n$ and $c \in \R$.
We show that those components
of $\mathcal{I}(f)^{\compl}$ which are cones, can be described
in terms of spectrahedra, as made precise in the following.

First recall the situation of a homogeneous quadratic polynomial
$f = \mathbf{z}^T A \mathbf{z}$. By possibly multiplying $A$ with $-1$,
we can assume that the number of positive eigenvalues of $A$ is at least the
number of negative eigenvalues.
In this setting, it is well known that a non-degenerate quadratic form
$f \in \R[\mathbf{z}]$ is hyperbolic if and only if $A$ has signature
$(n-1,1)$ \cite{garding-59}.

Specifically, for the normal form
\[
  f(\mathbf{z}) \ = \ \sum_{j=1}^{n-1} z_j^2 - z_n^2, \,
\]
we have
$
    \mathcal{I}(f) \ = \
    \{\mathbf{y} \in \R^n \; : \, y_n^2\leq  \sum_{j=1}^{n-1} y_j^2\} 
$
(see \cite{jtw-2019}). Hence, there are two
unbounded components in the complement $\mathcal{I}(f)^{\compl}$, 
both of which are full-dimensional cones, and these two components are
\[
\{\mathbf{y} \in \R^{n-1} \times \R_+ \, : \, \sum_{j=1}^{n-1} y_j^2 < y_n^2 \} 
\: \text{ and } \:
  \{\mathbf{y} \in \R^{n-1} \times \R_- \, : \, \sum_{j=1}^{n-1} y_j^2 < y_n^2 \}.
\]
For a general homogeneous quadratic form, this generalizes as follows.
  
\begin{lemma}
\label{le:complquadr}
For a quadratic form $f=\mathbf{z}^T A \mathbf{z} \in \R[\mathbf{z}]$ 
with $A$ having signature $(n-1,1)$, the
components $C$ of the complement of $\mathcal{I}(f)$ are
given by the two components of the set
\begin{equation}
  \label{eq:compl-spectra}
  \{ \mathbf{y} \in \R^{n} \ : \ \mathbf{y}^T A \mathbf{y} < 0 \} \, ,
\end{equation}
and the closures of these components are spectrahedra. 
\end{lemma}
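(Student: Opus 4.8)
The plan is to show that the set $\{\mathbf{y} \in \R^n : \mathbf{y}^T A \mathbf{y} < 0\}$ consists of exactly two components and that each has a spectrahedral closure; the identification of these with the components of $\mathcal{I}(f)^\compl$ then follows from the normal-form computation already recorded before the lemma, transported back via a congruence. First I would diagonalize $A$: since $A$ has signature $(n-1,1)$, there is an invertible $S \in \GL(n,\R)$ with $S^T A S = \diag(1, \ldots, 1, -1)$, so the linear change of coordinates $\mathbf{y} = S \mathbf{w}$ carries $\mathbf{y}^T A \mathbf{y} < 0$ to the normal form $\sum_{j=1}^{n-1} w_j^2 - w_n^2 < 0$. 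This is a homeomorphism of $\R^n$, so it preserves the number of connected components and maps components to components. From the explicit description preceding the lemma, the normal-form set has exactly two components, distinguished by the sign of $w_n$; pulling back, $\{\mathbf{y}^T A \mathbf{y} < 0\}$ has exactly two components, and these coincide with the two components of $\mathcal{I}(f)^\compl$ by the quoted identity $\mathcal{I}(f) = \{\mathbf{y} : y_n^2 \le \sum_{j=1}^{n-1} y_j^2\}$ (expressed in normal-form coordinates) together with the invariance of $\mathcal{I}(f)$ under the congruence.

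It remains to produce the spectrahedral representations of the closures. In normal-form coordinates, the closure of the component with $w_n > 0$ is $\{\mathbf{w} : w_n \ge 0, \; w_n^2 - \sum_{j=1}^{n-1} w_j^2 \ge 0\}$, which is the (closed) Lorentz cone. The key step is to write this as a linear matrix inequality. The standard Schur-complement trick does the job: for $w_n \ge 0$, the inequality $w_n^2 \ge \sum_{j=1}^{n-1} w_j^2$ is equivalent to positive semidefiniteness of the arrow-shaped matrix
\[
  L(\mathbf{w}) \ = \
  \begin{pmatrix}
    w_n & w_1 & \cdots & w_{n-1} \\
    w_1 & w_n & & \\
    \vdots & & \ddots & \\
    w_{n-1} & & & w_n
  \end{pmatrix}
  \ \succeq \ 0 \, ,
\]
since its Schur complement against the lower-right block $w_n I_{n-1}$ (when $w_n > 0$) is $w_n - \tfrac{1}{w_n}\sum_{j=1}^{n-1} w_j^2$, and the boundary case $w_n = 0$ forces all $w_j = 0$, consistent with both descriptions. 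Thus each closed component is a spectrahedron $\{\mathbf{w} : L(\pm\mathbf{w}) \succeq 0\}$, and substituting $\mathbf{w} = S^{-1}\mathbf{y}$ yields an affine-linear pencil in $\mathbf{y}$, so the closures are spectrahedra in the original coordinates as claimed.

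The routine calculations I would omit are the verification of the Schur-complement equivalence and the fact that a spectrahedron is preserved under an invertible linear substitution of variables (which only replaces each coordinate pencil matrix $A_j$ by $\sum_k (S^{-1})_{kj} A_k$). The main obstacle is a mild one: one must confirm that the two open components of $\{\mathbf{y}^T A \mathbf{y} < 0\}$ are genuinely the \emph{interiors} of the two spectrahedra, so that taking closures recovers exactly the spectrahedra and no spurious lower-dimensional pieces are introduced. This is handled by noting that $L(\mathbf{w}) \succ 0$ corresponds precisely to the strict inequalities $w_n > 0$ and $w_n^2 > \sum_{j=1}^{n-1} w_j^2$, which is the open Lorentz cone, i.e.\ the open component itself.
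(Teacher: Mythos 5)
Your proposal is correct and follows essentially the same route as the paper: diagonalize $A$ by a congruence $S^T A S = \diag(1,\ldots,1,-1)$, invoke the known normal-form description of the imaginary projection, and transport it back via the equivariance $\mathcal{I}(f(S\mathbf{z})) = S^{-1}\mathcal{I}(f(\mathbf{z}))$ (Proposition~\ref{pr:transf2}), so that $\mathcal{I}(f) = \{\mathbf{y} \in \R^n : \mathbf{y}^T A \mathbf{y} \ge 0\}$. Your explicit Schur-complement argument for the spectrahedral closures uses exactly the Lorentz-cone pencil the paper records in~\eqref{eq:lz} and Remark~\ref{re:ldl}; the paper's own proof of the lemma leaves this part implicit, so your write-up is, if anything, slightly more complete.
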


The proof makes use of the following property from~\cite{jtw-2019}.

\begin{proposition}\label{pr:transf2}
	Let $g\in\C[\mathbf{z}]$ and $T\in\R^{n\times n}$ be an invertible matrix. Then,
	$\mathcal{I}(g(T\mathbf{z}))=T^{-1} \mathcal{I}(g(\mathbf{z}))$.
\end{proposition}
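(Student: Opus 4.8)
The plan is to unfold the definition of the imaginary projection and to exploit the single structural fact that a \emph{real} linear map commutes with the operation of taking imaginary parts. Writing $h(\mathbf{z}) = g(T\mathbf{z})$, I would first record the elementary observation that for any real matrix $T \in \R^{n\times n}$ and any $\mathbf{w} \in \C^n$ one has $\Im(T\mathbf{w}) = T\,\Im(\mathbf{w})$: decomposing $\mathbf{w} = \mathbf{u} + i\mathbf{v}$ with $\mathbf{u},\mathbf{v}\in\R^n$, the product $T\mathbf{w} = T\mathbf{u} + i\,T\mathbf{v}$ again has real part $T\mathbf{u}$ and imaginary part $T\mathbf{v}$ precisely because the entries of $T$ are real. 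The very same identity holds with $T$ replaced by $T^{-1}$.

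Next I would use the invertibility of $T$ to set up a bijection between the zero sets. Since $T$ is invertible, the substitution $\mathbf{z} = T\mathbf{w}$ is a bijection of $\C^n$ onto itself, and $h(\mathbf{w}) = g(T\mathbf{w}) = 0$ holds if and only if $g(\mathbf{z}) = 0$. Hence the variety of $h$ is exactly $\{\,T^{-1}\mathbf{z} : g(\mathbf{z}) = 0\,\}$, i.e.\ the image of the variety of $g$ under $T^{-1}$.

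Combining these two steps yields the claim directly. For any root $\mathbf{w}$ of $h$, write $\mathbf{w} = T^{-1}\mathbf{z}$ with $\mathbf{z}$ a root of $g$; then $\Im(\mathbf{w}) = \Im(T^{-1}\mathbf{z}) = T^{-1}\Im(\mathbf{z}) \in T^{-1}\mathcal{I}(g)$, which gives $\mathcal{I}(h) \subseteq T^{-1}\mathcal{I}(g)$. Running the same argument with the root $\mathbf{z} = T\mathbf{w}$ of $g$ and the identity $\Im(T\mathbf{w}) = T\,\Im(\mathbf{w})$ produces the reverse inclusion, so $\mathcal{I}(g(T\mathbf{z})) = T^{-1}\mathcal{I}(g(\mathbf{z}))$.

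There is no serious obstacle here; the only points requiring care are to invoke the hypothesis that $T$ is real (so that $\Im$ genuinely commutes with multiplication by $T$ and by $T^{-1}$) and that $T$ is invertible (so that the substitution is a bijection, delivering both inclusions). I would emphasize that the real-linearity of $\Im$ is exactly what makes the argument transparent, and that the statement would fail for a non-real $T$, since then $\Im(T\mathbf{w})$ need not equal $T\,\Im(\mathbf{w})$.
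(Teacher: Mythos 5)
Your proof is correct and complete: the substitution $\mathbf{z} = T\mathbf{w}$ bijects the zero sets, and the identity $\Im(T^{-1}\mathbf{z}) = T^{-1}\Im(\mathbf{z})$ --- valid because the inverse of a real invertible matrix is again real --- transports the imaginary projections, with your closing remark on why realness and invertibility are both essential being exactly the right emphasis. Note that the paper itself offers no proof of Proposition~\ref{pr:transf2}, quoting it from \cite{jtw-2019}; your argument is the standard one underlying that reference, and since the substitution is a bijection you could even compress the two inclusions into a single chain of set equalities $\mathcal{I}(g(T\mathbf{z})) = \{\Im(T^{-1}\mathbf{z}) : g(\mathbf{z})=0\} = T^{-1}\mathcal{I}(g)$.
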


\begin{proof}[Proof of Lemma~\ref{le:complquadr}]
Since $-A$ has Lorentzian signature, there exists $S \in \GL(n,\R)$ with
$A_I \ := \ S^T A S \ = \ \diag(1, \ldots, 1, -1)$. Observing
\[
\mathcal{I}(f(S\mathbf{z})) \ = \ \mathcal{I}(\mathbf{z}^TA_I\mathbf{z})
\ = \ \{\mathbf{y}\in\R^n \ : \ y_n^2\leq \sum_{j=1}^{n-1} y_j^2 \}
\ =  \{\mathbf{y}\in\R^n \ : \ \mathbf{y}^TA_I\mathbf{y}\geq 0\} \, ,
\]
Proposition~\ref{pr:transf2} then gives
\[
\mathcal{I}(f(\mathbf{z})) \ = \ S\cdot \mathcal{I}(f(S\mathbf{z}))
\ = \ \{S\cdot \mathbf{y}\in\R^n \ : \ \mathbf{y}^TA_I\mathbf{y}\geq 0\}
\ = \ \{\mathbf{y}\in\R^n \ : \ \mathbf{y}^TA\mathbf{y}\geq 0\} \, .
\]
\end{proof}

For the general, not necessarily homogeneous case, recall that 
every quadric in $\R^n$ is affinely equivalent to a quadric given by 
one of the following polynomials,
\[
  \begin{array}{cll}
  \text{(I)}& \sum_{j=1}^p z_j^2 - \sum_{j=p+1}^r z_j^2 & \quad (1 \le p \le r, \, r \ge 1, \, p \ge \frac{r}{2}) \, , \\ [0.5ex]
 \text{(II)} & \sum_{j=1}^p z_j^2 - \sum_{j=p+1}^r z_j^2 + 1 &  \quad (0 \le p \le r, \, r \ge 1) \, , \\ [0.5ex]
 \text{(III)} & \sum_{j=1}^p z_j^2 - \sum_{j=p+1}^r z_j^2 + z_{r+1} & \quad (1 \le p \le r, \, r \ge 1,
  \, p \ge \frac{r}{2}) \, .
\end{array}
\]
We refer to \cite{berger-book}
as a general background reference for real quadrics.
We say that a given quadratic polynomial $f \in \R[\mathbf{z}]$ is of type $X$ 
if it can be transformed to the normal form $X$ by an affine real transformation.

The homogeneous case, case (I), has already been treated, and by
\cite{jtw-2019}, it is known that in case (III), the imaginary
projection does not contain a full-dimensional component in 
$\mathcal{I}(f)^{\compl}$.

By~\cite{jtw-2019}, in case (II), unbounded components only exist
in the cases $p=1$ and $p=r-1$, so we can restrict to these
cases. We list these relevant two cases from \cite{jtw-2019}.

 \begin{theorem}\label{th:quadrics}
Let $n \ge r \ge 3$ and $f \in \R[\mathbf{z}]$ be a quadratic polynomial.
If $f$ is of type $\mathrm{(II)}$, then
  \begin{equation}\label{eq:class2}
  \mathcal{I}(f) \ = \ \begin{cases}
  \{\mathbf{y} \in \R^n \; : \, y_1^2 - \sum_{j=2}^{r} y_j^2 \le 1\} & \text{if } p = 1 \, , \\
   \{\mathbf{y} \in \R^n \; : \, \sum_{j=1}^{r-1} y_j^2 > y_r^2 \} \cup \{\mathbf{0}\} & \text{if } p = r-1 \, . \\
  \end{cases}
\end{equation}
\end{theorem}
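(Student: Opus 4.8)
The plan is to substitute $\mathbf{z} = \mathbf{x} + i \mathbf{y}$ with $\mathbf{x}, \mathbf{y} \in \R^n$ into the normal form and to split the defining equation into its real and imaginary parts. Writing $\epsilon_j = 1$ for $1 \le j \le p$ and $\epsilon_j = -1$ for $p+1 \le j \le r$, set $Q(\mathbf{u}) = \sum_{j=1}^{r} \epsilon_j u_j^2$ and let $B$ denote its polarization. Since $z_j^2 = x_j^2 - y_j^2 + 2 i\, x_j y_j$, a direct computation gives $f(\mathbf{x} + i \mathbf{y}) = \big( Q(\mathbf{x}) - Q(\mathbf{y}) + 1 \big) + 2 i\, B(\mathbf{x}, \mathbf{y})$. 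Hence $\mathbf{y} \in \mathcal{I}(f)$ if and only if the system
\[
  Q(\mathbf{x}) = Q(\mathbf{y}) - 1, \qquad B(\mathbf{x}, \mathbf{y}) = 0
\]
admits a solution $\mathbf{x} \in \R^n$. As the variables $z_{r+1}, \ldots, z_n$ do not occur in $f$, only the coordinates $1, \ldots, r$ are constrained, and it suffices to analyze $\R^r$ endowed with the nondegenerate form $Q$ of signature $(p, r-p)$.

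The key reformulation is that, for a fixed $\mathbf{y}$, the equation $B(\mathbf{x}, \mathbf{y}) = 0$ forces $\mathbf{x}$ into the $Q$-orthogonal complement $\mathbf{y}^{\perp}$, so a solution exists precisely when the target value $Q(\mathbf{y}) - 1$ lies in the range of the restriction $Q|_{\mathbf{y}^{\perp}}$. Everything therefore reduces to determining this range, which is dictated by the signature of $Q|_{\mathbf{y}^{\perp}}$ and hence by the sign of $Q(\mathbf{y})$. I would record the standard trichotomy: for a timelike $\mathbf{y}$ the complement is definite (of one sign), for a spacelike $\mathbf{y}$ the complement is again indefinite, and for a lightlike $\mathbf{y} \ne \mathbf{0}$ the complement is degenerate with $\mathbf{y}$ in its radical, so that $Q|_{\mathbf{y}^{\perp}}$ is only semidefinite.

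Carrying out the two cases then becomes bookkeeping. For $p = 1$ (signature $(1, r-1)$): if $Q(\mathbf{y}) > 0$ the complement is negative definite, so $Q|_{\mathbf{y}^{\perp}}$ has range $(-\infty, 0]$ and the system is solvable iff $Q(\mathbf{y}) \le 1$; if $Q(\mathbf{y}) < 0$ the complement has signature $(1, r-2)$ and, because $r \ge 3$, is indefinite of dimension $\ge 2$, so $Q|_{\mathbf{y}^{\perp}}$ is surjective onto $\R$ and the system is always solvable; the lightlike and the $\mathbf{y} = \mathbf{0}$ subcases are solvable as well. All subcases collapse to the single condition $Q(\mathbf{y}) \le 1$, i.e.\ $y_1^2 - \sum_{j=2}^{r} y_j^2 \le 1$. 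For $p = r-1$ (signature $(r-1, 1)$) the signs flip: $Q(\mathbf{y}) > 0$ is solvable (indefinite complement, using $r \ge 3$), whereas $Q(\mathbf{y}) < 0$ and $Q(\mathbf{y}) = 0$ with $\mathbf{y} \ne \mathbf{0}$ force $Q|_{\mathbf{y}^{\perp}}$ to be positive (semi)definite with range $[0, \infty)$, which cannot meet the negative value $Q(\mathbf{y}) - 1$; only $\mathbf{y} = \mathbf{0}$ is additionally solvable. This yields $\{ \mathbf{y} : \sum_{j=1}^{r-1} y_j^2 > y_r^2 \} \cup \{\mathbf{0}\}$.

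The main obstacle is the careful treatment of the degenerate situations: when $\mathbf{y}$ is lightlike the orthogonal complement contains $\mathbf{y}$ and $Q|_{\mathbf{y}^{\perp}}$ is merely semidefinite, while the exceptional point $\mathbf{y} = \mathbf{0}$ removes the orthogonality constraint entirely. The latter is exactly the source of the isolated point $\{\mathbf{0}\}$ appearing in the $p = r-1$ description. One must also invoke $r \ge 3$ precisely to ensure that the complement of a spacelike vector stays indefinite of dimension at least two, so that $Q|_{\mathbf{y}^{\perp}}$ attains every real value; for $r = 2$ this fails and the conclusion would change.
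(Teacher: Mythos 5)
Your proof is correct, and it takes a genuinely different route through the core of the argument than the paper does. Both proofs start identically, splitting $f(\mathbf{x}+i\mathbf{y})=0$ into the real equation $Q(\mathbf{x})=Q(\mathbf{y})-1$ and the bilinear equation $B(\mathbf{x},\mathbf{y})=0$. From there the paper argues computationally: it uses the joint $\SO(n-1)$-symmetry in $(\mathbf{x}',\mathbf{y}')$ to assume all but two coordinates of $\mathbf{y}$ vanish, solves the bilinear equation explicitly for $x_1$ (assuming $y_1\neq 0$), substitutes into the real equation, and reads off solvability of the resulting scalar equation, patching in the subcase $y_1=0$ separately. You instead note that $B(\mathbf{x},\mathbf{y})=0$ means $\mathbf{x}\in\mathbf{y}^{\perp}$ (the $Q$-orthogonal complement), so solvability is precisely membership of $Q(\mathbf{y})-1$ in the range of $Q|_{\mathbf{y}^{\perp}}$, which is then determined by the signature of the restriction via the timelike/spacelike/lightlike trichotomy. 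Your version buys a coordinate-free treatment in which the degenerate cases (lightlike $\mathbf{y}$, and $\mathbf{y}=\mathbf{0}$) are handled uniformly through the radical of the restricted form --- exactly the spots where the paper resorts to ad hoc remarks --- and it makes the role of the hypothesis $r\ge 3$ transparent (the restricted form must remain indefinite, respectively not identically zero, on a space of dimension at least two). The paper's computation buys explicitness: it exhibits concrete solutions $\mathbf{x}$ and needs no facts about signatures of restrictions to hyperplanes. One shared blemish, inherited from the theorem statement rather than introduced by you: like the paper (which assumes $r=n$ ``without loss of generality''), your reduction to $\R^r$ means that for $r<n$ the isolated point $\{\mathbf{0}\}$ in the $p=r-1$ case should really be read as the subspace $\{\mathbf{y}\in\R^n : y_1=\cdots=y_r=0\}$, since the variables $z_{r+1},\dots,z_n$ are unconstrained.
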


For the proof see \cite{jtw-2019}. 
Since the proofs of the case $p=1$ and of the case $p=r-1$ differ in some important details, which are not carried out there, we include a proof here for the convenience of the reader.

\begin{proof}Without loss of generality we can assume $r=n$.
Writing $z_j=x_j+iy_j$, 
we have $f(\mathbf{z})=\sum_{j=1}^{p}z_j^2- \sum_{j=p+1}^{n}z_j^2+1=0$
if and only if 
\begin{eqnarray}
\sum_{j=1}^{p}x_j^2-\sum_{j=p+1}^{n}x_j^2-\sum_{j=1}^{p}y_j^2+\sum_{j=p+1}^{n}y_j^2+1& = & 0 \label{eq:imclass1} \\
\text{ and } \sum_{j=1}^{p}x_jy_j-\sum_{j=p+1}^{n}x_jy_j & = & 0.
\label{eq:imclass2}
\end{eqnarray}
Set $\alpha:=-\sum_{j=1}^{p}y_j^2+\sum_{j=p+1}^{n}y_j^2+1$,
and let $\mathbf{y} \in \R^n$ be fixed. Note that in both cases
$p=1$ and $p=n-1$, we have ${\bf 0} \in\I(f)$,
since $f(\mathbf{x} + i \cdot \mathbf{0})=0$ for 
$\mathbf{x}=(0, \ldots ,0,1)$. Hence, we can assume ${\bf y} \neq \mathbf{0}$.
 
\medskip

\noindent
\emph{Case $p=1$}: Write 
$\mathbf{x} = (x_1,\mathbf{x}') = (x_1, x_2, \ldots, x_n)$
and $\mathbf{y} = (y_1, \mathbf{y}') = (y_1, y_2, \ldots, y_n)$. Observe the
rotational symmetry of~\eqref{eq:imclass1} w.r.t.\ $\mathbf{x}'$ and 
$\mathbf{y}'$ and
the invariance of the standard scalar product 
$(\mathbf{x}',\mathbf{y}') \mapsto \sum_{j=2}^n x_j y_j$
under orthogonal transformations. Hence, 
if $((x_1,\mathbf{x}'),(y_1,\mathbf{y}'))$
is a solution of~\eqref{eq:imclass1} and~\eqref{eq:imclass2}, then
for any $T \in \SO(n{-}1)$, the point
$((x_1,T\mathbf{x}'),(y_1,T\mathbf{y}'))$ is a solution as well,
where $\SO(n{-}1)$ denotes the special orthogonal group of order $n-1$. 
Thus, we can assume $y_3 = \cdots = y_n = 0$, and $\alpha$ simplifies
to $\alpha = -y_1^2 + y_2^2+1$. Solving~\eqref{eq:imclass2} for $x_1$
(by assuming, without loss of generality, $y_1 \neq 0$)
yields $x_1 = \frac{x_2 y_2}{y_1}$ and substituting this into~\eqref{eq:imclass1}
then 
\[
  0 \ = \ \left( \frac{y_2^2}{y_1^2} -1 \right) 
    x_2^2 - \sum_{j=3}^n x_j^2 + \alpha
     \ = \ \frac{(\alpha - 1) x_2^2}{y_1^2} - \sum_{j=3}^n x_j^2 + \alpha.
\]
This equation has a real solution $(x_2, \ldots, x_n)$ if and only if 
$\alpha \ge 0$, which shows
$\mathcal{I}(f) \ = \{\mathbf{y} \in \R^n \; : \, y_1^2 - \sum_{j=2}^{n} y_j^2 \le 1\}$.

\smallskip

\noindent
\emph{Case $p=n-1$}: Following the same proof strategy, we now
write $x = (\mathbf{x}',x_n) = (x_1, x_2, \ldots, x_n)$
and $y = (\mathbf{y}',y_n) = (y_1, y_2, \ldots, y_n)$.
Then the symmetry of the problem allows to 
assume $y_2 = \cdots = y_{n-1} = 0$, and $\alpha$ simplifies
to $\alpha = -y_1^2 + y_n^2+1$. If $y_1 \neq 0$,
solving~\eqref{eq:imclass2} for $x_1$
gives $x_1 = \frac{x_n y_n}{y_1}$, and a substitution into~\eqref{eq:imclass1}
\[
  0 \ = \ \left( \frac{y_n^2}{y_1^2} -1 \right) 
     x_n^2 + \sum_{j=2}^{n-1} x_j^2 + \alpha 
     \ = \ \frac{(\alpha - 1) x_2^2}{y_1^2} + \sum_{j=2}^{n-1} x_j^2 + \alpha.
\]
There exists a real solution $(x_2, \ldots, x_n)$ if and only if 
$\alpha < 1$, which, taking also into account the special case $y_1=0$, gives $\mathcal{I}(f) = 
\{\mathbf{y} \in \R^n \; : \, \sum_{j=1}^{n-1} y_j^2 > y_n^2 \} \cup \{\mathbf{0}\} $.
\end{proof}

For the inhomogeneous case, we use the following lemma to reduce
it to the homogeneous case.

\begin{lemma}
Let $n \ge 3$ and 
$f \in \R[\mathbf{z}]$ be quadratic of the form~\eqref{eq:fquadr}.

If $f$ is of type~(II) with $p=1$, then $\mathcal{I}(f)^{\compl}$ 
does not have connected components whose closures contain 
full-dimensional cones.

If $f$ is of type~(II) with $p=n-1$ then every full-dimensional cone
which is contained in $\mathcal{I}(f)^\compl$ is contained 
in the closure of a hyperbolicity cone of $\init(f)$. 
\end{lemma}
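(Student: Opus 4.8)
The plan is to reduce both statements to the homogeneous normal form treated in Theorem~\ref{th:quadrics}, using that the imaginary projection ignores real affine changes of coordinates. Since $f$ is of type~(II), there are an invertible $S \in \GL(n,\R)$ and a real vector $\mathbf{u} \in \R^n$ such that $g(\mathbf{w}) := f(S\mathbf{w} + \mathbf{u})$ is the corresponding normal form $\sum_{j=1}^{p} w_j^2 - \sum_{j=p+1}^{n} w_j^2 + 1$, where, as in Theorem~\ref{th:quadrics}, we may assume $r = n$. First I would note that the real translation by $\mathbf{u}$ leaves all imaginary parts unchanged, hence does not affect the imaginary projection, while the linear substitution is handled by Proposition~\ref{pr:transf2}; together these give $\mathcal{I}(f) = S \cdot \mathcal{I}(g)$ and thus $\mathcal{I}(f)^{\compl} = S \cdot \mathcal{I}(g)^{\compl}$. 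As $S$ is a linear homeomorphism, it carries connected components to connected components, closures to closures, and (full-dimensional) cones to (full-dimensional) cones, so every claim about cones and components can be verified for $g$ and transported back by $S$.

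For $p = 1$, Theorem~\ref{th:quadrics} gives $\mathcal{I}(g) = \{\mathbf{y} \in \R^n : y_1^2 - \sum_{j=2}^{n} y_j^2 \le 1\}$, whose complement $\{\mathbf{y} : y_1^2 - \sum_{j=2}^{n} y_j^2 > 1\}$ consists of exactly the two components $\{y_1 > 0\}$ and $\{y_1 < 0\}$. Every point in either component satisfies $|y_1| > 1$, so the closure of each component is contained in $\{|y_1| \ge 1\}$ and, in particular, omits the origin. Since the closure of any nonzero cone contains $\mathbf{0}$ (let $\lambda \to 0^+$ along a ray), no such closure can contain a full-dimensional cone; transporting this back via $S$ yields the first assertion.

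For $p = n-1$, I would use that $\init(f) = \mathbf{z}^T A \mathbf{z}$ is the top-degree part of $f$. Comparing top-degree parts in $g(\mathbf{w}) = f(S\mathbf{w}+\mathbf{u})$ yields $S^T A S = \diag(1,\ldots,1,-1)$, so $A$ has signature $(n-1,1)$ and $\init(g) = \init(f)\circ S$. By Lemma~\ref{le:complquadr}, together with the fact that the hyperbolicity cones of a homogeneous polynomial coincide with the components of the complement of its imaginary projection, the hyperbolicity cones of $\init(g)$ are the two components $D_+, D_-$ of $\{\mathbf{y} : \sum_{j=1}^{n-1} y_j^2 < y_n^2\}$, and those of $\init(f)$ are $S D_+, S D_-$. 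On the other hand, Theorem~\ref{th:quadrics} gives $\mathcal{I}(g)^{\compl} = \big(\cl(D_+) \cup \cl(D_-)\big) \setminus \{\mathbf{0}\}$ with $\cl(D_+) \cap \cl(D_-) = \{\mathbf{0}\}$. Given a full-dimensional cone $K \subseteq \mathcal{I}(f)^{\compl}$, I would pass to $K' = S^{-1} K \subseteq \mathcal{I}(g)^{\compl}$ and consider its interior, which is nonempty, connected, and avoids $\mathbf{0}$ (otherwise $K' = \R^n$). Since $\cl(D_+)\setminus\{\mathbf{0}\}$ and $\cl(D_-)\setminus\{\mathbf{0}\}$ are disjoint and closed in $\R^n \setminus \{\mathbf{0}\}$, the connected set $\inter K'$ lies in one of them, say $\cl(D_+)$, whence $K' \subseteq \cl(\inter K') \subseteq \cl(D_+)$. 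Applying $S$ gives $K \subseteq S\,\cl(D_+) = \cl(S D_+)$, a closed hyperbolicity cone of $\init(f)$.

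The step I expect to be the main obstacle is the final separation argument in the case $p=n-1$: showing that a full-dimensional cone cannot meet both closed nappes $\cl(S D_+)$ and $\cl(S D_-)$ except at the apex. This relies on the cone being connected with connected interior --- which is exactly the situation for the proper (convex) cones relevant to $K$-stability --- and on the two closed nappes intersecting only in $\mathbf{0}$; phrasing it through $\inter K'$ and the identity $\cl(K') = \cl(\inter K')$ for convex $K'$ makes it clean. The remaining ingredients, namely tracking the affine transformation and observing that the real translation is harmless, are routine once Proposition~\ref{pr:transf2} is in hand.
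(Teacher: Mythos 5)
Your proof is correct, but it takes a genuinely different route from the paper's. The paper settles the case $p=1$ in one line as a direct consequence of~\eqref{eq:class2} (your observation that the closure of any nonempty cone contains the origin, while the closures of the components lie in $\{|y_1|\ge 1\}$, is precisely the detail the paper leaves out), and for $p=n-1$ it does not argue within the paper at all: it cites two external results, \cite[Theorem 4.2 and Lemma 4.3]{joergens-theobald-hyperbolicity}, which for a general (not necessarily quadratic) polynomial place the interior of a full-dimensional cone in $\mathcal{I}(f)^{\compl}$ inside a hyperbolicity cone of $\init(f)$. You instead stay self-contained: reduction to the normal form via Proposition~\ref{pr:transf2} (with the harmless real translation), the explicit description of $\mathcal{I}(g)^{\compl}=\big(\cl(D_+)\cup\cl(D_-)\big)\setminus\{\mathbf{0}\}$ from Theorem~\ref{th:quadrics}, the identification of $D_{\pm}$ as the hyperbolicity cones via Lemma~\ref{le:complquadr} and the fact that hyperbolicity cones coincide with components of the complement, and finally a clopen/connectedness separation in $\R^n\setminus\{\mathbf{0}\}$. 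The paper's citation buys brevity and degree-independent generality; your argument buys a proof built entirely from statements already proved or quoted in the paper, with an explicit picture of the two nappes. One point deserves emphasis: the connectedness (equivalently, for cones with interior, convexity) hypothesis you flag as the main obstacle is genuinely needed, both for your separation step and for the statement itself --- the double cone $\big(\cl(D_+)\cup\cl(D_-)\big)\setminus\{\mathbf{0}\}$ is a full-dimensional cone contained in $\mathcal{I}(g)^{\compl}$ that lies in no single closed hyperbolicity cone. The paper resolves this implicitly by phrasing its proof for a cone contained in a single \emph{component} of $\mathcal{I}(f)^{\compl}$ (components are connected), whereas you resolve it by restricting to convex cones, which is the situation in all of the paper's applications; either reading makes the lemma correct, and your proof is sound under it.
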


Note, that in particular, that $\mathcal{I}(f)^{\compl}$ does not
contain a point at all
if and only if $\init(f)$ is not hyperbolic.

\begin{proof}
If $f$ is of type~(II) with $p=1$, 
then the statement is a consequence of~\eqref{eq:class2}.

Now consider the case that $f$ is of type~(II) with $p=n-1$ and let 
$C$ be full-dimensional cone which is contained in
a component of $\mathcal{I}(f)^\compl$. 
By \cite[Theorem 4.2 and Lemma 4.3]{joergens-theobald-hyperbolicity},
$\inter C$ is contained in a hyperbolicity cone of $\init(f)$.
\end{proof}

Hence, among the quadratic polynomials of type (II), only the ones with
$p=n-1$ might possibly be $K$-stable.

\begin{theorem}
\label{th:conic-comp-quadr}
Let $n \ge 3$ and
$f \in \R[\mathbf{z}]$ be quadratic of the form~\eqref{eq:fquadr}
and of type (II) with $p=n-1$.
Then there exists a linear form $\ell(\mathbf{z})$ in $\mathbf{z}$ such that
$-\ell(\mathbf{z})^{n-2} {\init(f)}$ 
has a determinantal representation. In particular, the closure of 
each unbounded component of $\mathcal{I}(f)^\compl$ is a spectrahedral 
cone.
\end{theorem}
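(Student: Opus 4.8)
The plan is to reduce to a Lorentzian normal form and then to read off the classical arrow-matrix spectrahedral representation of the second-order cone. First I would observe that for a quadratic $f$ as in~\eqref{eq:fquadr} the initial form is simply the leading quadratic part, $\init(f)(\mathbf{z}) = \mathbf{z}^T A \mathbf{z}$. Since $f$ is of type~(II) with $p = n-1$ (which is the case $p = r-1$, so $r = n$), an affine map carries $f$ to $\sum_{j=1}^{n-1} z_j^2 - z_n^2 + 1$; as the linear part of an affine change of variables acts on $A$ by congruence, Sylvester's law of inertia shows that $A$ is nonsingular with signature $(n-1,1)$. Hence there is $S \in \GL(n,\R)$ with $S^T A S = \diag(1,\dots,1,-1)$. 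Writing $\mathbf{w} = S^{-1}\mathbf{z}$, the entries $w_1,\dots,w_n$ are linear forms in $\mathbf{z}$ and $\init(f) = \sum_{j=1}^{n-1} w_j^2 - w_n^2$, so that $-\init(f) = w_n^2 - \sum_{j=1}^{n-1} w_j^2$.

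Next I would introduce the symmetric linear pencil
\[
  M(\mathbf{w}) \ = \
  \begin{pmatrix}
    w_n & w_1 & \cdots & w_{n-1} \\
    w_1 & w_n & & \\
    \vdots & & \ddots & \\
    w_{n-1} & & & w_n
  \end{pmatrix},
\]
the classical spectrahedral description of the Lorentz cone. Taking the Schur complement with respect to the lower-right block $w_n I$ yields the identity $\det M(\mathbf{w}) = w_n^{\,n-2}\bigl(w_n^2 - \sum_{j=1}^{n-1} w_j^2\bigr)$. Setting $\ell(\mathbf{z}) := w_n = (S^{-1}\mathbf{z})_n$, which is a linear form in $\mathbf{z}$, this reads $\det M(\mathbf{w}) = \ell^{\,n-2}\,(-\init(f)) = -\ell^{\,n-2}\init(f)$. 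Substituting $\mathbf{w} = S^{-1}\mathbf{z}$ turns $M$ into a symmetric matrix pencil with entries linear in $\mathbf{z}$, and $\det M(S^{-1}\mathbf{z}) = -\ell(\mathbf{z})^{\,n-2}\init(f)(\mathbf{z})$ is the sought determinantal representation.

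For the ``in particular'' assertion I would combine this representation with the preceding lemma and Lemma~\ref{le:complquadr}. The hyperbolicity cones of the homogeneous form $\init(f) = \mathbf{z}^T A \mathbf{z}$ are exactly the two components of $\{\mathbf{y} : \mathbf{y}^T A \mathbf{y} < 0\}$, whose closures are the spectrahedral cones $\{\mathbf{z} : M(S^{-1}\mathbf{z}) \succeq 0\}$ and $\{\mathbf{z} : M(S^{-1}\mathbf{z}) \preceq 0\}$. By the preceding lemma every full-dimensional cone contained in $\mathcal{I}(f)^{\compl}$ lies in the closure of such a hyperbolicity cone, and by the description~\eqref{eq:class2} of $\mathcal{I}(f)$ in the case $p = n-1$ these two cones are precisely the two unbounded components of $\mathcal{I}(f)^{\compl}$. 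Hence the closure of each unbounded component is one of the spectrahedral cones above.

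The genuinely new point, and the only real obstacle, is the factor $\ell^{\,n-2}$. A non-degenerate Lorentzian form in $n$ variables has rank $n$, and a $2 \times 2$ symmetric determinantal representation can only produce quadratic forms of rank at most three; hence for $n > 3$ the form $\init(f)$ cannot be represented on the nose, and one is forced to raise the degree. Multiplying by $\ell^{\,n-2}$ is exactly what makes the $n \times n$ arrow matrix available. The remaining work is routine: verifying the determinant identity for $M$ and checking that the signature of $A$ is oriented so that the result is $-\init(f)$ rather than $+\init(f)$.
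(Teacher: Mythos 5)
Your proof is correct and takes essentially the same route as the paper's: both reduce $\init(f)=\mathbf{z}^TA\mathbf{z}$ to the Lorentzian normal form by a congruence (your Sylvester diagonalization $S$ is exactly the paper's transformation $T$, which Remark~\ref{re:ldl} likewise obtains from an $LDL^T$ decomposition of $A$), apply the arrow-matrix pencil~\eqref{eq:lz} for the Lorentz cone with the same determinant identity $\det L(\mathbf{w}) = w_n^{n-2}\bigl(w_n^2-\sum_{j=1}^{n-1}w_j^2\bigr)$, and identify the unbounded components of $\mathcal{I}(f)^{\compl}$ via~\eqref{eq:class2} and the hyperbolicity cones of $\init(f)$. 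The only differences are cosmetic (Schur complement versus row reduction for the determinant, and your added remark explaining why the factor $\ell^{n-2}$ is unavoidable for $n>3$).
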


The theorem can be seen as an adaption of the well-known result that hyperbolic
quadratic forms have determinantal representations. See, e.g., \cite[Section~2]{vandenberghe-boyd-survey}
or \cite[Example 2.16]{netzer-thom-2012} for the determinantal representations which underlie that
result and which are utilized in the subsequent proof.

\begin{proof}
First consider the normal form of type (II) with $p=n-1$,
\[
  g(\mathbf{z}) \ = \ \sum_{j=1}^{n-1} z_j^2 - z_n^2 + 1 \, .
\]
By~\eqref{eq:class2}, the complement of $\mathcal{I}(g)$ has the
two unbounded conic components
\[
  \{ \mathbf{y} \in \R^{n-1} \times \R_+ \, : \, \sum_{j=1}^{n-1} y_j^2 \le y_n^2 \} \setminus \{0\} \: \text{ and } \:
  \{ \mathbf{y} \in \R^{n-1} \times \R_- \, : \, \sum_{j=1}^{n-1} y_j^2 \le y_n^2 \} \setminus \{0\},
\]
which (up to the origin) are the open Lorentz cone and its negative.
Their closures are exactly the 
closures of the hyperbolicity cones of the initial form $\init(g)$ of $g$.
It is well-known that the open Lorentz cone 
has the spectrahedral representation
\begin{equation}
  \label{eq:lz}
  L(\mathbf{z}) \ := \ \left(
  \begin{array}{ccc|c}
   &          & & z_ 1 \\
   & z_n I&  & \vdots \\
   &         & & z_{n-1} \\ \hline
   z_1 & \cdots & z_{n-1} & z_n
   \end{array} \right) \ \succ \ 0 \, ,
\end{equation}
and thus we also have $z_n^{n-2}\init(g) = -\det(L(\mathbf{z}))$. 
Since $g$ results from $f$ by an affine transformation, the initial 
form $\init(g)$ results from the initial form $\init(f)$ by a linear
transformation,
\[
  \init(g)(T\mathbf{z}) \ = \ \init(f)(\mathbf{z})
\]
for some matrix $T \in \GL(n,\R)$. Hence, we obtain the spectrahedral
representation for one of the unbounded conic components in
$\mathcal{I}(f)^{\compl}$,
\[
  F(\mathbf{z}) \ := \ \left(
  \begin{array}{ccc|c}
   &          & & (T\mathbf{z})_ 1 \\
   & (T \mathbf{z})_n I&  & \vdots \\
   &         & & (T\mathbf{z})_{n-1} \\ \hline
   (T\mathbf{z})_1 & \cdots & (T\mathbf{z})_{n-1} & (T\mathbf{z})_n
   \end{array} \right) \ \succ \ 0 \, ,
\]
as well as its negative.
Moreover,
\[
  -\det F(\mathbf{z}) \ = \ ((T \mathbf{z})_n)^{n-2} \init(f) \, ,
\] 
so that $(T \mathbf{z})_n$ provides the desired linear form $\ell(\mathbf{z})$.
\end{proof}

\begin{remark}\label{re:ldl}
Concerning $L(\mathbf{z})$ in~\eqref{eq:lz}, 
by subtracting $\frac{z_j}{z_n}$ times the $j$-th row from its
$n$-th row for every $j\in\{1,\dots,n-1\}$, we obtain
	\[
        \det(L(\mathbf{z})) = 
        \det\left(
	\begin{array}{ccc|c}
	&          & & z_ 1 \\
	& z_n I&  & \vdots \\
	&         & & z_{n-1} \\ \hline
	0 & \cdots & 0 & z_n-\frac{1}{z_n}\sum\nolimits_{i=1}^{n-1}z_i^2
	\end{array} \right) 
	 =  z_n^{n-2}\left(z_n^2-\sum\limits_{i=1}^{n-1}z_i^2\right).
	\]
By~\eqref{eq:fquadr}, in the proof we have 
$\init(f) = \mathbf{z}^T A \mathbf{z}$. Let $A = LDL^T$ be an
$LDL^T$ decomposition of $A$ with 
$D = \diag(d_1, \ldots, d_{n-1}, d_n)$ such that $d_1, \ldots, d_{n-1} > 0$
and $d_n < 0$. Then the variable transformation $T$ in the proof
is
\[
  T \ = \ \diag( \sqrt{d_1}, \ldots, \sqrt{d_{n-1}}, \sqrt{|d_n|}) \cdot L^T
\]
and we derive 
\[
	A=T^T\cdot 
	\left(
	\begin{array}{ccc|c}
	&          & & 0 \\
	&  I&  & \vdots \\
	&         & & 0 \\ \hline
	0 & \cdots & 0 & -1
	\end{array} \right) \cdot T.
\]
\end{remark}

\begin{ex}
	Consider $f(z_1,z_2,z_3,z_4)=-15z_1^2-12z_1z_4+z_2^2+z_3^2=\mathbf{z}^TA\mathbf{z}$ with
	\begin{equation*}
	A=\left(\begin{matrix}
	-15 & 0 & 0 & -6 \\
	0 & 1 & 0 & 0 \\
	0 & 0 & 1 & 0 \\
	-6 & 0 & 0 & 0
	\end{matrix}\right).
	\end{equation*}
For $\ell(\mathbf{z})=4z_1+2z_4$, 
a representation from Theorem~\ref{th:conic-comp-quadr} is
	\begin{equation*}
	-\ell(\mathbf{z})^2\cdot f(\mathbf{z})=\det \left(\begin{matrix}
	4z_1+2z_4 & 0 & 0 & z_1+2z_4 \\
	0 & 4z_1+2z_4 & 0 & z_2 \\
	0 & 0  & 4z_1+2z_4 & z_3 \\
	z_1+2z_4 & z_2 & z_3 & 4z_1+2z_4
	\end{matrix}\right).
	\end{equation*}
\end{ex}
\begin{remark}
A quadratic polynomial $f \in \R[\z]$ is of the form~\eqref{eq:fquadr}
and of type (II) with $p=n-1$ (i.e., $-f$ has Lorentzian signature) if
and only if $f \in \R[\z]$ is a real zero polynomial, see 
for example \cite{dey-pillai-2018}. 
\end{remark}

\begin{remark}For the case of homogeneous polynomials,
Theorem \ref{th:conic-comp-quadr} recovers the known fact that 
hyperbolicity cones defined by homogeneous quadratic polynomials $f$ are 
spectrahedral \cite{netzer-thom-2012}.
In the affine setting, we can homogenize the type (II) polynomial
$f$ w.r.t.\ variable $z_0$ and get a quadratic polynomial of type (I) in $n+1$ variables with $p=n$. Then, using $\init(f_{h})=f_{h}$, Theorem~\ref{th:conic-comp-quadr} recovers that
\textit{the rigidly convex sets} (introduced by Helton-Vinnikov \cite{helton-vinnikov-2007}) defined by real zero polynomials $f$ are 
spectrahedra \cite{netzer-thom-2012}.
\end{remark}

\begin{remark}
 The proof of Theorem \ref{th:conic-comp-quadr} explicitly explains a technique to compute a suitable linear factor $\ell(\z)$ as well as a determinantal representation to get a spectrahedral structure.
\end{remark}

\section{Conic stability and positive maps\label{se:conicstab-posmaps}}

Based on the characterizations of the conic components in the
complement of $\mathcal{I}(f)$, we now study the problem whether
$f$ is $K$-stable, in particular, whether it is psd-stable.

In order to decide whether the cone $K$ is contained in one of the 
components of $\mathcal{I}(f)^{\compl}$, observe that in the case of spectrahedral
representations of $K$ and of the components of $\mathcal{I}(f)^{\compl}$,
the problem of $K$-stability can be phrased as a containment problem
for spectrahedra. The theory of positive and completely positive maps (as detailed
in \cite{paulsen-2003})
provides a sufficient condition for the containment problem of spectrahedra,
see \cite{hkm-2010,ktt-2013,ktt-2015}.

\begin{definition}
Given two linear subspaces $\mathcal{U}\subseteq \herm_k$ and
$\mathcal{V}\subseteq\herm_l$ 
(or $\mathcal{U}\subseteq \sym_k$ and $\mathcal{V}\subseteq\sym_l$), 
a linear map 
$\Phi:\ \mathcal{U}\rightarrow\mathcal{V}$ is
called \emph{positive} if 
$\Phi(U) \succeq 0$ for any $U \in \mathcal{U}$ with $U \succeq 0$.

For $d \ge 1$, define the \emph{$d$-multiplicity map} $\Phi_d$ on the set of all
Hermitian $d \times d$ block matrices with symmetric $n \times n$-matrix entries
by
\[
  (A_{ij})_{i,j=1}^d \ \mapsto \ \big(\Phi \left(A_{ij}\right) \big)_{i,j=1}^d.
\]
The map $\Phi$ is called \emph{$d$-positive} if the $d$-multiplicity map $\Phi_d$ (viewed
as a map on a Hermitian matrix space) is a positive map.
$\Phi$ is called \emph{completely positive} if $\Phi_d$ is a positive map for all $d \ge 1$.
\end{definition}

Let $U(\mathbf{x}) = \sum_{j=1}^n U_j x_j$ 
and $V(\mathbf{x}) = \sum_{j=1}^n V_j x_j$ be homogeneous linear 
pencils with symmetric matrices of size $k \times k$ and 
$l \times l$, respectively
(since the matrices are symmetric, we prefer to denote the variables by 
$\mathbf{x}$ rather than $\mathbf{z}$).
Then the spectrahedra
$S_U := \{ \x \in \R^n \, : \, U(\mathbf{x}) \succeq 0 \}$,
and $S_V := \{ \x \in \R^n \, : \, V(\mathbf{x}) \succeq 0 \}$
are cones.
Further, let $\mathcal{U} = \myspan(U_1, \ldots, U_n) \subseteq \sym_k$
and
$\mathcal{V} = \myspan(V_1, \ldots, V_n) \subseteq \sym_l$.

If $U_1, \ldots, U_n$ are linearly independent, then
the linear mapping
$\Phi_{UV} : \mathcal{U} \to \mathcal{V}$,
$\Phi_{UV}(U_i) := V_i$, $1 \le i \le n$, is well defined.

\begin{proposition}[\cite{ktt-2013}]
\label{pr:posmaps}
Let  $U_1, \ldots, U_n \subseteq \herm_k$ (or, $U_1, \ldots, U_n \subseteq \sym_k$, respectively)
be linearly independent and $S_U \neq \emptyset$. Then for the properties
\begin{enumerate}
\item the semidefinite feasibility problem
\begin{equation}
  \label{eq:sdfp}
  C = \left( C_{ij} \right)_{i,j=1}^{k} \succeq 0
  \text{ and }
  V_{p} = \sum_{i,j=1}^{k} (U_p)_{ij} C_{ij}
  \text{ for } p = 1,\ldots,n
\end{equation}
has a solution with Hermitian (respectively symmetric) matrix $C$,
\item $\Phi_{UV}$ is completely positive,
\item $\Phi_{UV}$ is positive,
\item $S_U \subseteq S_V$,
\end{enumerate}
the implications and equivalences
  $(1) \ \Longrightarrow \ (2) \ \Longrightarrow (3) \Longleftrightarrow (4)$
hold, and if $\mathcal{U}$ contains a positive definite matrix, $(1) \Longleftrightarrow (2)$.
\end{proposition}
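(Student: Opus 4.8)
The plan is to establish the chain by disposing of the easy implications first and reserving the extension-theoretic core for last. The central dictionary is that, since $U_1, \ldots, U_n$ are linearly independent, the assignment $\mathbf{x} \mapsto U(\mathbf{x}) = \sum_{j=1}^n U_j x_j$ is a linear bijection from $\R^n$ onto $\mathcal{U}$, and under this identification $\Phi_{UV}$ sends the pencil $U(\mathbf{x})$ to $V(\mathbf{x})$, i.e.\ $\Phi_{UV}(U(\mathbf{x})) = V(\mathbf{x})$. Every statement below is read through this correspondence.

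First I would prove $(3) \Leftrightarrow (4)$. Positivity of $\Phi_{UV}$ means that $\Phi_{UV}(U) \succeq 0$ for every $U \in \mathcal{U}$ with $U \succeq 0$. Using the bijection above, every such $U$ is of the form $U(\mathbf{x})$ with $\mathbf{x} \in S_U$, and $\Phi_{UV}(U(\mathbf{x})) = V(\mathbf{x})$, so positivity reads exactly as ``$\mathbf{x} \in S_U$ implies $V(\mathbf{x}) \succeq 0$'', which is $S_U \subseteq S_V$. The implication $(2) \Rightarrow (3)$ is immediate from the definitions, since complete positivity includes the case $d = 1$, which is plain positivity.

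Next I would treat $(1) \Rightarrow (2)$ via the Choi-matrix viewpoint. Given a solution $C = (C_{ij})_{i,j=1}^k \succeq 0$ of the feasibility problem, define the linear map $\Psi : \herm_k \to \herm_l$ by $\Psi(A) = \sum_{i,j=1}^k a_{ij} C_{ij}$, where $a_{ij}$ is the $(i,j)$-entry of $A$; Hermitianity of $C$ ensures $\Psi$ maps Hermitian matrices to Hermitian matrices. By construction $\Psi(U_p) = V_p = \Phi_{UV}(U_p)$, so $\Psi$ extends $\Phi_{UV}$ from $\mathcal{U}$ to all of $\herm_k$. Evaluating $\Psi$ on the matrix units shows that $C$ is precisely the Choi matrix of $\Psi$, so $C \succeq 0$ is equivalent, by Choi's theorem, to complete positivity of $\Psi$. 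Complete positivity of $\Psi$ restricts to complete positivity of $\Phi_{UV}$ in the sense of the $d$-multiplicity maps, giving $(2)$; the real (symmetric) case is handled by the real analogue of this correspondence.

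Finally, the converse $(2) \Rightarrow (1)$ under the hypothesis that $\mathcal{U}$ contains a positive definite matrix is where the real work lies, and I expect this to be the main obstacle. The idea is to extend the completely positive map $\Phi_{UV}$ from $\mathcal{U}$ to a completely positive map on all of $\herm_k$ and then read off $C$ as the Choi matrix of that extension. The positive definite element $P \in \mathcal{U}$ is exactly what upgrades $\mathcal{U}$ to an operator system: after the congruence $A \mapsto P^{-1/2} A P^{-1/2}$, the image of $P$ becomes the identity, so the transformed subspace is a self-adjoint unital subspace, and congruence by a fixed invertible matrix preserves complete positivity. Arveson's extension theorem (see \cite{paulsen-2003}) then extends the transformed completely positive map to a completely positive map on the full matrix algebra, and undoing the congruence yields a completely positive extension $\Psi$ of $\Phi_{UV}$ on $\herm_k$. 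Its Choi matrix is positive semidefinite and satisfies the equations in~\eqref{eq:sdfp}, producing the desired solution $C$. The delicate points to verify are that the congruence genuinely turns $\mathcal{U}$ into an operator system containing the identity, that complete positivity is transported correctly in both directions, and that the interplay of the positive definite element with the hypothesis $S_U \neq \emptyset$ makes the extension argument applicable.
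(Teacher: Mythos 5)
The paper does not prove this proposition at all --- it is imported verbatim from \cite{ktt-2013} --- so the comparison can only be made against the argument in that reference (and the work of Helton, Klep and McCullough it builds on). Your proof follows exactly that route: the linear bijection $\mathbf{x} \mapsto U(\mathbf{x})$ afforded by linear independence for $(3) \Leftrightarrow (4)$, the case $d=1$ for $(2) \Rightarrow (3)$, the reading of the feasibility system~\eqref{eq:sdfp} as ``$C$ is the Choi matrix of an extension $\Psi$ of $\Phi_{UV}$'' for $(1) \Rightarrow (2)$, and unitalization by the congruence with $P^{-1/2}$ followed by Arveson's extension theorem and Choi's theorem for $(2) \Rightarrow (1)$. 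These steps are sound; for $(1) \Rightarrow (2)$ note that the only direction of Choi's theorem you need (psd Choi matrix implies complete positivity) also survives in the real symmetric case, e.g.\ via the Kraus form $\Psi(A) = \sum_r W_r A W_r^T$ obtained from a factorization $C = \sum_r w_r w_r^T$, so your appeal to a ``real analogue'' is justified.

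The one place where the sketch stops short of a proof is the point you yourself flag: the operator-system structure in $(2) \Rightarrow (1)$. The congruence image $P^{-1/2}\,\mathcal{U}\,P^{-1/2}$ is only a \emph{real} subspace of $\herm_k$ (respectively $\sym_k$), whereas Arveson's extension theorem in \cite{paulsen-2003} is stated for complex operator systems. So you must pass to the complex span $\mathcal{S} = P^{-1/2}\mathcal{U}P^{-1/2} + i\,P^{-1/2}\mathcal{U}P^{-1/2}$, extend the transported map complex-linearly, and verify that this extension is still completely positive; this verification uses the standard identification of a Hermitian matrix $X+iY$ with the real symmetric matrix $\left(\begin{smallmatrix} X & -Y \\ Y & X \end{smallmatrix}\right)$, which is the same device the paper itself invokes in the proof of Theorem~\ref{th:kstable-determinantal}. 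In the symmetric case one further step is needed at the end: Arveson and Choi produce a complex Hermitian psd matrix, and the required real symmetric solution of~\eqref{eq:sdfp} is obtained by taking entrywise real parts, which preserves positive semidefiniteness and the linear constraints since all $U_p, V_p$ are real. Finally, a small correction to your closing sentence: the hypothesis $S_U \neq \emptyset$ plays no role in the extension step (for homogeneous pencils $0 \in S_U$ always); it is solely the positive definite element of $\mathcal{U}$ that makes the unitalization, and hence Arveson's theorem, applicable. With these verifications supplied, your argument is complete and coincides with the proof in \cite{ktt-2013}.
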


Note that the statement $(1) \Longrightarrow (4)$ (which does not involve
the definition of $\Phi_{UV}$) is also valid without the assumption
of linear independence of $U_1, \ldots, U_n$ (see \cite{hkm-2010,ktt-2013}).

So, in case the cone $K$ and the conic components of 
$\mathcal{I}(f)^{\compl}$ can be described in terms of spectrahedra,
we can approach the conic stability problem in terms of the 
block matrix $C\succeq 0$ in~\eqref{eq:sdfp}, the so-called \emph{Choi matrix},
corresponding to an appropriate positive map $\Phi$, which maps the underlying 
pencils of those spectrahedra onto each other certifying their containment. 
This sufficient condition is provided by a certain semidefinite feasibility problem 
whose non-emptiness of its feasible domain thus provides a sufficient criterion 
for psd-stability.

Moreover, if we know a spectrahedral description of some 
of the components of $\mathcal{I}(f)^{\compl}$
(as in the quadratic case or the determinantal case),
the sufficient containment criterion is based on writing 
a matrix pencil for these components using linear combinations
of the matrices of a linear matrix pencil for $K$.
As formalized in Theorem~\ref{th:quadr-certificate} and 
Corollary~\ref{co:quadr-certificate},
taking the determinant of a matrix pencil for a suitable component
of $\mathcal{I}(f)^{\compl}$ provides a 
particular determinantal description
for the homogeneous part of the given polynomial $f$.
That description has exactly the structure of the sufficient
determinantal criterion for psd-stability and thus provides an elegant
determinantal representation that certifies the psd-stability of 
a homogeneous polynomial $f$.

Let $K$ be a cone which is given as the positive semidefiniteness region 
of a linear matrix pencil
$M(\mathbf{x}) = \sum_{j=1}^n M_j x_j$ with symmetric $l \times l$-matrices
(since $K$ is a cone in $\R^n$, we prefer to denote the variables by $\mathbf{x}$
rather than $\mathbf{z}$).
In the case of usual stability, the cone $K$ is the positive semidefiniteness
region of the linear matrix pencil
\begin{equation}
  \label{eq:kge0}
  M^{\ge 0}(\mathbf{x}) \ = \ \sum_{j=1}^n M^{\ge 0}_j x_j
\end{equation}
with $M^{\ge 0}_j = E_{jj}$, where $E_{ij}$ is the matrix
with a one in position $(i,j)$ and zeros elsewhere.
In the case of psd-stability, the matrix pencil is
\begin{equation}
  \label{eq:kpsd}
  M^{\psdtext}(X) \ = \ \sum_{i,j=1}^{n} M^{\psdtext}_{ij} x_{ij}
\end{equation}
with symmetric matrix variables $X = (x_{ij})$
and $M^{\psdtext}_{ij} = \frac{1}{2}(E_{ij} + E_{ji})$, i.e.,
$M^{\psdtext}(X)$ is the matrix pencil
$M^{\psdtext}(X) = (x_{ij})_{ij}$ in the symmetric matrix variables
$x_{ij}$.

\begin{theorem}
\label{th:kstable-determinantal}
Let $f = \det(A_0 + \sum_{j=1}^n A_j z_j)$ with Hermitian matrices $A_0, \ldots, A_n$
be a degree $d$ determinantal polynomial 
of the form~\eqref{eq:fdetpoly} such that $\init(f)$ is irreducible and 
there exists $\mathbf{e} \in \R^n$ with $\sum_{j=1}^n A_j e_j \succ 0$.
Let $M(\mathbf{x}) = \sum_{j=1}^n M_j x_j$ with symmetric $l \times l$-matrices
be a pencil of the cone $K$.
If there exists a Hermitian block matrix $C=(C_{ij})_{i,j=1}^l$ with 
blocks $C_{ij}$ of size $d \times d$ and

\begin{equation}
  \label{eq:containment-crit1}
  C = (C_{ij})_{i,j=1}^l \ \succeq \ 0,
  \quad \forall p = 1, \ldots, n \; : \;
  \sigma A_p \ = \ \sum_{i,j=1}^l (M_p)_{ij} C_{ij}
\end{equation}
for some $\sigma \in \{-1,1\}$,
then $f$ is $K$-stable. Deciding whether such a block matrix $C$ exists
is a semidefinite feasibility problem.
\end{theorem}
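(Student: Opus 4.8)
The plan is to translate the hypothesis~\eqref{eq:containment-crit1} into a containment of spectrahedral cones and then combine Theorem~\ref{th:cones-determinantal} with Proposition~\ref{pr:posmaps}. First I would record what Theorem~\ref{th:cones-determinantal} gives under the present hypotheses: since $\init(f) = \det(\sum_{j=1}^n A_j z_j)$ is irreducible and $\sum_{j=1}^n A_j e_j \succ 0$, the form $\init(f)$ is hyperbolic with exactly the two open hyperbolicity cones $C_1 = \{\mathbf{x} \in \R^n : \sum_{j=1}^n A_j x_j \succ 0\}$ and $C_2 = \{\mathbf{x} \in \R^n : \sum_{j=1}^n A_j x_j \prec 0\}$, both contained in $\mathcal{I}(f)^{\compl}$. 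Writing $A(\mathbf{x}) = \sum_{j=1}^n A_j x_j$ and fixing the sign $\sigma \in \{-1,1\}$ supplied by the hypothesis, the cone of interest is $C_\sigma$, and its closure is the spectrahedron $S_{\sigma A} = \{\mathbf{x} : \sigma A(\mathbf{x}) \succeq 0\}$, for which $\sigma \mathbf{e}$ is a strictly feasible point because $\sigma A(\sigma \mathbf{e}) = A(\mathbf{e}) \succ 0$.

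Next I would set up the containment problem precisely. The cone $K$ is the spectrahedron $S_M = \{\mathbf{x} : M(\mathbf{x}) \succeq 0\}$ of the pencil $M(\mathbf{x}) = \sum_{j=1}^n M_j x_j$, and $S_M = K \neq \emptyset$ since $K$ is proper. Taking $U_p := M_p$ (so that the outer block size is $l$) and $V_p := \sigma A_p$ (of size $d \times d$) in the notation of Proposition~\ref{pr:posmaps}, the semidefinite feasibility problem~\eqref{eq:sdfp} becomes verbatim the hypothesis~\eqref{eq:containment-crit1}: a Hermitian block matrix $C = (C_{ij})_{i,j=1}^l \succeq 0$ with $d \times d$ blocks such that $\sigma A_p = \sum_{i,j=1}^l (M_p)_{ij} C_{ij}$ for all $p$. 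Hence the implication $(1) \Rightarrow (4)$ of Proposition~\ref{pr:posmaps}, which by the remark following it requires only $S_U = S_M \neq \emptyset$ and no linear independence of the $M_j$, yields the spectrahedral containment $K = S_M \subseteq S_{\sigma A} = \cl C_\sigma$.

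It then remains to upgrade this closed containment to $\inter K \subseteq C_\sigma$, which I expect to be the one step needing genuine care rather than bookkeeping. By monotonicity of the interior, $\inter K \subseteq \inter(\cl C_\sigma)$, and since $C_\sigma$ is an open convex cone (nonempty because of $\sigma \mathbf{e}$), convexity gives $\inter(\cl C_\sigma) = C_\sigma$; concretely, if $\sigma A(\mathbf{x}_0) \succeq 0$ then $\sigma A(\mathbf{x}_0 + \epsilon \sigma \mathbf{e}) = \sigma A(\mathbf{x}_0) + \epsilon A(\mathbf{e}) \succ 0$ for $\epsilon > 0$, so $\inter S_{\sigma A} = \{\mathbf{x} : \sigma A(\mathbf{x}) \succ 0\} = C_\sigma$. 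Combining the inclusions gives $\inter K \subseteq C_\sigma \subseteq \mathcal{I}(f)^{\compl}$, that is, $\mathcal{I}(f) \cap \inter K = \emptyset$, which by Definition~\ref{de:kstable} is exactly the $K$-stability of $f$. Finally, the last assertion is immediate: condition~\eqref{eq:containment-crit1} consists of a positive semidefiniteness constraint on $C$ together with linear equations in the entries of $C$, so deciding the existence of such a $C$ is a semidefinite feasibility problem.
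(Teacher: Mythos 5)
Your proof is correct and follows essentially the same route as the paper's: Theorem~\ref{th:cones-determinantal} supplies the two hyperbolicity cones of $\init(f)$ inside $\mathcal{I}(f)^{\compl}$, and the implication $(1) \Rightarrow (4)$ of Proposition~\ref{pr:posmaps} (valid without linear independence of the $M_j$) converts~\eqref{eq:containment-crit1} into the containment $K \subseteq \cl C_\sigma$. The only differences are cosmetic: the paper unfolds that containment step explicitly via a Khatri--Rao product computation (including the symmetric-to-Hermitian extension of Liu's result), whereas you cite the proposition directly, and you spell out the interior-of-closure step $\inter K \subseteq \inter(\cl C_\sigma) = C_\sigma$ that the paper leaves implicit.
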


Note that a necessary condition of $K$-stability of $f$ is obtained as follows.
Fix any vector $\mathbf{v}$ in the interior of the cone $K$. Then a necessary condition for
$K$-stability is that $\mathbf{v}$ is contained in the complement of
$\mathcal{I}(f)$.

\begin{proof}Let $C$ be a block matrix $C=(C_{ij})_{i,j=1}^l$ with
$d \times d$-blocks and which satisfies~\eqref{eq:containment-crit1}
for some $\sigma \in \{-1,1\}$.
The initial form $\init(f)$ is hyperbolic and,
by~Theorem~\ref{th:cones-determinantal}, every hyperbolicity
cone of $\init(f)$ is contained in $\mathcal{I}(f)^{\compl}$.
So, in order to show $K$-stability of $f$, it suffices to show that 
$K$ is contained in the closure of a hyperbolicity cone of $\init(f)$, i.e., 
in the closure of a component of $\mathcal{I}(\init(f))^{\compl}$.

As recorded at the beginning of Section~\ref{se:coniccomponents},
since $\init(f)$ is irreducible, $\init(f)$ has exactly 
two hyperbolicity cones, and these are given by
$A^h(\mathbf{x}) = \sum_{j=1}^n A_j x_j \succ 0$ as well as
$A^h(\mathbf{x}) = \sum_{j=1}^n A_j x_j \prec 0$.

By Proposition~\ref{pr:posmaps},
if~\eqref{eq:containment-crit1} is satisfied, say with $\sigma = 1$,
then the spectrahedron given by the matrix pencil $M(\mathbf{x})$ is contained 
in the closure of $\mathcal{I}(\init(f))^{\compl}$. For the service of the reader, 
we provide an explicit derivation of this step in our setting. Namely,
for $\mathbf{x}$ in the spectrahedron defined by $M(\mathbf{x})$, we have
\begin{eqnarray}
	A^h(\mathbf{x}) & = & \sum_{p=1}^n A_p x_p
	\ = \ \sum_{p=1}^n x_p \sum_{i,j=1}^l (M_p)_{ij} C_{ij} 
	\label{eq:comb1} \\
	& = & \sum_{i,j=1}^l (M(\mathbf{x}))_{ij} C_{ij}. \label{eq:comp2}
\end{eqnarray}
Apply the Khatri-Rao product (where the blocks of $M(\mathbf{x})$ are of size $1 \times 1$
and the blocks of $C$ are of size $d \times d$).
Since $M(\mathbf{x})$ and $C$ are positive semidefinite, the Khatri-Rao product
\[
M(\mathbf{x}) * C \ := \ ((M(\mathbf{x}))_{ij} \otimes C_{ij})_{i,j=1}^l
\ = \ ((M(\mathbf{x}))_{ij} C_{ij})_{i,j=1}^l
\]
is positive semidefinite as well; see Liu \cite{liu1999}, where this property is stated
on the space of
symmetric positive semidefinite matrices. 
Since $M(\mathbf{x})$ is a real symmetric pencil, Liu's result carries over to our 
situation of a Hermitian positive semidefinite matrix $C$ by employing that a Hermitian matrix $Z=X+iY$ 
with $X \in \sym_k$ and $Y$ skew-symmetric
is positive semidefinite if and only if the real symmetric matrix 
\[
 \left( \begin{array}{cc}X & -Y \\
  Y & Z 
  \end{array} \right) \ \in \ \sym_{2k}
\]
is positive semidefinite (see, e.g., \cite{goemans-williamson-2004}).

Altogether, since 
\[
A^h(\mathbf{x}) \ = \ (I \cdots I) (M(\mathbf{x})*C) \begin{pmatrix} I \\ \vdots \\ I \end{pmatrix} \, ,
\]
$A^h(\mathbf{x})$ is positive semidefinite
as well. Hence, $\mathbf{x}$ is contained in the spectrahedron
defined by $A^h(\mathbf{x})$.
Since $A^h(\mathbf{x})$ is the matrix pencil of the closure of a component of 
$\mathcal{I}(\init(f))^{\compl}$, the claim follows.
\end{proof}

Note that the constant coefficient matrix
$A_{0}$ does not play any role for the criterion in 
Theorem~\ref{th:kstable-determinantal}. This comes from
Theorem~\ref{th:cones-determinantal} and its proof, where
only the Hermitian property of $A_0$ matters rather than
the exact values of the coefficients themselves.

\begin{remark}\label{rem:usual-stability}
In the special case of usual stability, 
Theorem~\ref{th:kstable-determinantal} provides a new proof for
Borcea and Br\"and\'{e}n's determinantal criterion from Proposition~\ref{pr:crit-stable}.
Namely, for usual stability, $K$ is given by~\eqref{eq:kge0} and thus,
a matrix $C$ satisfying the hypothesis of 
Theorem~\ref{th:kstable-determinantal} can be viewed as a 
a block diagonal matrix $C=(C_{ij})_{i=1}^l$ with 
diagonal blocks $C_{ii}$ of size $d \times d$ and 
vanishing non-diagonal blocks $C_{ij}$ ($i \neq j$).
Since the condition~\eqref{eq:containment-crit1} specializes
to 
\[
  A_p = C_{pp} \quad \text{ for } p = 1, \ldots, n,
\]
the stability criterion in Theorem~\ref{th:kstable-determinantal}
is satisfied if and only if the matrices
$A_1, \ldots, A_n$ are positive semidefinite.
\end{remark}

\begin{remark}Theorem~\ref{th:kstable-determinantal} gives a sufficient
criterion, but it is not necessary. As a counterexample,
consider the following adaption from an example 
in~\cite[Example 3.1, 3.4]{hkm-2010}
and~\cite[Section 6.1]{ktt-2013}.
Let $K \subseteq \R^3$ be the Lorentz cone as given by~\eqref{eq:lz}.
The polynomial
\[
  f \ = \ \det \left( \begin{array}{cc}
     z_1 + z_3 & z_2 \\
     z_2 & -z_1 + z_3
    \end{array}
   \right) \ = \ z_3^2-z_1^2-z_2^2
\]
(whose underlying matrix pencil provides an alternative matrix pencil
for the Lorentz cone) has all its zeroes
on the boundary of the Lorentz cone or on its negative. 
Hence, $f$ is $K$-stable,
but by the results in \cite{hkm-2010} and 
\cite{ktt-2013}, the condition~\eqref{eq:containment-crit1}
is not satisfied.
\end{remark}

\begin{example}
\label{ex:example-determinantal}
i) Let $g(z_1,z_2,z_3):=31z_1^2+32z_1z_3+8z_3^2-8z_1z_2-16z_2^2$. A determinantal representation of $g$ is given by $\text{det}\left(\begin{matrix}
4z_1+2z_3 & z_1+4z_2 \\
z_1+4z_2 & 8z_1+4z_3
\end{matrix}\right)$, and at $\mathbf{z}=(0,0,1)^T$, the matrix polynomial
is positive definite.
Let $M(\mathbf{x})$ denote the linear matrix pencil of the psd cone $\sym^+_2$. Then the psd-stability of $g$ follows from Theorem~\ref{th:kstable-determinantal} and by the matrix 
\begin{equation*}
C=\left( \begin{array}{rrrr}
4 & 1 & 0 & 2 \\
1 & 8 & 2 & 0 \\
0 & 2 & 2 & 0 \\
2 & 0 & 0 & 4
\end{array}\right)\succeq 0.
\end{equation*}

ii) Let $f = \sum_{i,j=1}^2 M_{ij}^{\psdtext} x_{ij}
  = \begin{pmatrix} 1 & 0 \\ 0 & 0 \end{pmatrix} x_{11}
  + \begin{pmatrix} 0 & 1 \\ 1 & 0 \end{pmatrix} x_{12}
  + \begin{pmatrix} 0 & 0 \\ 0 & 1 \end{pmatrix} x_{22}$ 
  be the canonical matrix polynomial of the $2 \times 2$-psd cone. Clearly,
  $f$ is psd-stable, and the following consideration shows that this is also
  recognized by the sufficient criterion.
  For symmetric $2 \times 2$-matrices, the condition in 
  Theorem~\ref{th:kstable-determinantal} requires to find 
  a block matrix $C\succeq 0$ with $2 \times 2$ blocks of size $2 \times 2$
  such that
\begin{equation}
  \label{eq:psdinpsd}
 M_{pq}^{\psdtext} =\sum_{i,j=1}^{2} (M_{pq}^{\psdtext})_{ij} C_{ij}
 \quad \text{ for $1 \le p, q \le 2$.}
\end{equation}
This yields $C_{11}=\left(\begin{matrix}
1 & 0 \\
0 & 0
\end{matrix}\right)$,
$C_{22}=\left(\begin{matrix}
0 & 0 \\
0 & 1
\end{matrix}\right)$
and $C_{12}+C_{21}=\left(\begin{matrix}
0 & 1 \\
1 & 0
\end{matrix}\right)$. Since $C = \begin{pmatrix} C_{11} & C_{12} \\ C_{21} & C_{22} \end{pmatrix} $ is symmetric, $C_{12}$ must be of the form
$\begin{pmatrix}
0 & \gamma \\
\delta & 0
\end{pmatrix}$ with $\gamma, \delta \in \R$. Positive semidefiniteness of 
$C$
then implies $\delta = 0$, and further, the condition on $C_{12} + C_{21}$
gives $\gamma = 1$. Hence, the matrix
\[
C=\left(\begin{matrix}
1 & 0 & 0 & 1 \\
0 & 0 & 0 & 0 \\
0 & 0 & 0 & 0 \\
1 & 0 & 0 & 1
\end{matrix}\right)
\]
satisfies~\eqref{eq:psdinpsd} and thus
certifies the psd-stability of $f$ in view of the sufficient 
criterion in Theorem~\ref{th:kstable-determinantal}.
\end{example}

For quadratic polynomials, we can provide the following criterion.
As in the proof of Theorem~\ref{th:conic-comp-quadr},
for a homogeneous quadratic polynomial
$f(\mathbf{z}) = \mathbf{z}^T A \mathbf{z}$ 
of signature $(n-1,1)$,
we consider
\begin{equation}
\label{eq:ft}
  F(\mathbf{x}) := \sum_{p=1}^n F_p x_p \ := \ \left(
  \begin{array}{ccc|c}
   &          & & (T\mathbf{x})_ 1 \\
   & (T \mathbf{x})_n I&  & \vdots \\
   &         & & (T\mathbf{x})_{n-1} \\ \hline
   (T\mathbf{x})_1 & \cdots & (T\mathbf{x})_{n-1} & (T\mathbf{x})_n
   \end{array} \right) \ \succ \ 0 \, ,
\end{equation}
where $T$ is as in that proof.

\begin{theorem}
\label{th:kstable-quadratic}
Let $n \ge 3$ and
$f$ be a quadratic polynomial of the form~\eqref{eq:fquadr},
let $f$ be of type (II) with $A$ having signature $\left(n-1,1\right)$ and $\init(f)$ be
irreducible. 
Let $M(\mathbf{x})$ be a matrix pencil for the cone $K$, and
let $T$ and $F(\mathbf{x}) := \sum_{p=1}^n F_p x_p$ be defined
as in~\eqref{eq:ft} w.r.t.\ $\init(f)$.
If there exists a block matrix $C=(C_{ij})_{i=1}^l$ with 
blocks $C_{ij}$ of size $d \times d$ and
\begin{equation}
  \label{eq:containment-crit2}
  C = (C_{ij})_{i,j=1}^l \ \succeq \ 0,
  \quad \forall p = 1, \ldots, n \; : \;
  \sigma F_p \ = \ \sum_{i,j=1}^l (M_p)_{ij} C_{ij}
\end{equation}
for some $\sigma \in \{-1,1\}$, 
then $f$ is $K$-stable.
Deciding whether such a block matrix $C$ exists is a semidefinite
feasibility problem.
\end{theorem}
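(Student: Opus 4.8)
The plan is to follow the blueprint of the proof of Theorem~\ref{th:kstable-determinantal}, replacing the determinantal hyperbolicity pencil $A^h(\mathbf{x})$ by the Lorentz-type pencil $F(\mathbf{x})$ of~\eqref{eq:ft}. By Definition~\ref{de:kstable} together with Theorem~\ref{ThmConicStability}, the polynomial $f$ is $K$-stable precisely when $\inter K \subseteq \mathcal{I}(f)^{\compl}$, so it suffices to place $\inter K$ inside one unbounded conic component of $\mathcal{I}(f)^{\compl}$.

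First I would record the geometry of $\mathcal{I}(f)^{\compl}$ supplied by Theorem~\ref{th:conic-comp-quadr}. Since $f$ is of type~(II) with $A$ of signature $(n-1,1)$, the matrix $T$ and the pencil $F(\mathbf{x}) = \sum_{p=1}^n F_p x_p$ describe the two unbounded conic components of $\mathcal{I}(f)^{\compl}$ as the open spectrahedral cones $\{\mathbf{x} : F(\mathbf{x}) \succ 0\}$ and $\{\mathbf{x} : F(\mathbf{x}) \prec 0\}$; these are exactly the two hyperbolicity cones of $\init(f)$, each contained in $\mathcal{I}(f)^{\compl}$, and their closures are the strictly feasible spectrahedra $\{\mathbf{x} : \pm F(\mathbf{x}) \succeq 0\}$. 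I note in passing that, since $n \ge 3$, the full-rank form $\init(f) = \mathbf{z}^T A \mathbf{z}$ is automatically irreducible (a real quadratic form factors into linear forms only in rank at most $2$), so the irreducibility hypothesis is free here and is what guarantees that there are exactly these two components. Because each spectrahedron is strictly feasible, $\inter\{\mathbf{x} : \sigma F(\mathbf{x}) \succeq 0\} = \{\mathbf{x} : \sigma F(\mathbf{x}) \succ 0\}$ for each sign $\sigma$.

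The core step converts~\eqref{eq:containment-crit2} into a spectrahedral containment through positive maps. Taking $M(\mathbf{x})$ as source pencil and $\sigma F(\mathbf{x})$ as target pencil, the feasibility system~\eqref{eq:containment-crit2} is exactly condition~(1) of Proposition~\ref{pr:posmaps}, the block matrix $C$ carrying the $l \times l$ block pattern of $M$ with blocks sized to the $F_p$. Invoking the implication $(1) \Rightarrow (4)$, which by the remark following Proposition~\ref{pr:posmaps} requires neither linear independence of the $M_j$ nor more than $K = S_M \ne \emptyset$ (true, as $K$ is full-dimensional), yields $K = S_M \subseteq \{\mathbf{x} : \sigma F(\mathbf{x}) \succeq 0\}$. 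Since an inclusion of sets carries over to interiors, $\inter K \subseteq \inter\{\mathbf{x} : \sigma F(\mathbf{x}) \succeq 0\} = \{\mathbf{x} : \sigma F(\mathbf{x}) \succ 0\}$, the open conic component; hence $\inter K \subseteq \mathcal{I}(f)^{\compl}$, and $f$ is $K$-stable. Should one wish to avoid citing Proposition~\ref{pr:posmaps} as a black box, the containment can be re-derived exactly as in Theorem~\ref{th:kstable-determinantal} via the Khatri-Rao product $M(\mathbf{x}) * C$, whose positive semidefiniteness on $S_M$ forces $\sigma F(\mathbf{x}) \succeq 0$.

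The semidefinite feasibility assertion is then immediate: the unknown is the Hermitian matrix $C$, constrained by the linear matrix inequality $C \succeq 0$ together with the linear equalities $\sigma F_p = \sum_{i,j=1}^l (M_p)_{ij} C_{ij}$, which is a standard SDP feasibility problem. I expect the only genuinely delicate point to be the topological passage from closed-spectrahedron containment to the strict statement $\inter K \subseteq \{\mathbf{x} : \sigma F(\mathbf{x}) \succ 0\}$: one must use that the transformed Lorentz cone $\{\sigma F \succeq 0\}$ is strictly feasible, so that its interior is exactly $\{\sigma F \succ 0\}$, and that this open cone is a component of $\mathcal{I}(f)^{\compl}$ rather than only sitting in its closure. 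Matching the two admissible signs $\sigma \in \{-1,1\}$ to the two opposite components of $\mathcal{I}(f)^{\compl}$ is the remaining bookkeeping.
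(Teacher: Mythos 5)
Your proof is correct and takes essentially the same route as the paper's: the component geometry from Theorem~\ref{th:conic-comp-quadr} combined with the Choi-matrix containment certificate, where your appeal to Proposition~\ref{pr:posmaps} $(1) \Rightarrow (4)$ is exactly what the paper re-derives explicitly via the Khatri--Rao product as in Theorem~\ref{th:kstable-determinantal}. Your explicit treatment of the passage from $K \subseteq \{\mathbf{x} : \sigma F(\mathbf{x}) \succeq 0\}$ to $\inter K \subseteq \{\mathbf{x} : \sigma F(\mathbf{x}) \succ 0\} \subseteq \mathcal{I}(f)^{\compl}$, and the remark that irreducibility of $\init(f)$ is automatic for signature $(n-1,1)$ with $n \ge 3$, are details the paper leaves implicit.
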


\begin{proof}
By~Theorem~\ref{th:conic-comp-quadr} and its proof, the unbounded
components of $\mathcal{I}(f)^\compl$ which are full-dimensional cones
are exactly the hyperbolicity cones of $\init(f)$.
For $\mathbf{x}$ in the spectrahedron defined by 
$M(\mathbf{x})\succeq 0$, we have
	\[
		F(\mathbf{x}) \ = \ \sum_{p=1}^n F_p x_p 
		\ = \ \sum_{p=1}^n x_p \sum_{i,j=1}^l (M_p)_{ij} C_{ij}
		\ = \ \sum_{i,j=1}^l (M(\mathbf{x}))_{ij} C_{ij}.
	\]
Analogous to the application of the Khatri-Rao product in the 
proof of Theorem~\ref{th:kstable-determinantal}, this yields
$F(\mathbf{x}) \succeq 0$. Hence, $f$ is $K$-stable.
\end{proof}

\begin{theorem}
  \label{th:quadr-certificate}
Let $n \ge 3$ and $f(\mathbf{z}) = \mathbf{z}^T A \mathbf{z}$ 
be an irreducible homogeneous quadratic polynomial of signature $(n-1,1)$,
$M(\mathbf{z})$ be a matrix pencil for the cone $K$, and
let $T$ and $F(\mathbf{z}) := \sum_{p=1}^n F_p z_p$ be defined
as in~\eqref{eq:ft}.
If there exists a block matrix $C=(C_{ij})_{i=1}^l$ with
blocks $C_{ij}$ of size $d \times d$ satisfying
\begin{equation}
  \label{eq:containment-cr1b}
  C = (C_{ij})_{i,j=1}^l \ \succeq \ 0,
  \quad \forall p = 1, \ldots, n \; : \;
  \sigma F_p \ = \ \sum_{i,j=1}^l (M_p)_{ij} C_{ij}
\end{equation}
for some $\sigma \in \{-1,1\}$,
then there exists a linear form $\ell(\mathbf{z})$ such that
$-\ell(\mathbf{z})^{n-2} f$ has a determinantal representation
\[
  - \sigma \ell(\mathbf{z})^{n-2} f = \det(\sum_{p=1}^n z_p \sum_{i,j=1}^l (M_p)_{ij} C_{ij})
\]
with positive semidefinite matrices $C_{ij}$.
The representation provides a certificate for the $K$-stability of $f$.
\end{theorem}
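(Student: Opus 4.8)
The plan is to read off the determinantal representation by substituting the containment data~\eqref{eq:containment-cr1b} directly into the determinantal factorization of the transformed Lorentz pencil already produced in Theorem~\ref{th:conic-comp-quadr}. First I would note that~\eqref{eq:containment-cr1b} says precisely that, for each $p$, the coefficient matrix $\sum_{i,j=1}^l (M_p)_{ij} C_{ij}$ equals $\sigma F_p$. Summing these equalities against $z_p$ collapses the pencil inside the determinant to a scalar multiple of the pencil $F(\mathbf{z})$ from~\eqref{eq:ft},
\[
  \sum_{p=1}^n z_p \sum_{i,j=1}^l (M_p)_{ij} C_{ij} \ = \ \sigma \sum_{p=1}^n z_p F_p \ = \ \sigma F(\mathbf{z}),
\]
so that the determinant on the right-hand side of the asserted identity is simply $\det(\sigma F(\mathbf{z}))$.

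Next I would take determinants and feed in the factorization established in the proof of Theorem~\ref{th:conic-comp-quadr} and made explicit in Remark~\ref{re:ldl}: with the linear form $\ell_0(\mathbf{z}) := (T\mathbf{z})_n$ one has $\det F(\mathbf{z}) = -\ell_0(\mathbf{z})^{n-2} \init(f)$, and since $f$ is homogeneous, $\init(f) = f$. As $F$ is an $n \times n$ pencil, pulling the scalar $\sigma$ through the determinant gives
\[
  \det(\sigma F(\mathbf{z})) \ = \ \sigma^n \det F(\mathbf{z}) \ = \ -\sigma^n \ell_0(\mathbf{z})^{n-2} f .
\]
It then remains to rewrite the prefactor $-\sigma^n$ in the stated form $-\sigma$ times an $(n-2)$-th power of a linear form. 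This is achieved by absorbing the residual sign $\sigma^{n-1}$ into a rescaling $\ell := \lambda \ell_0$ (and, according to the parity of $n$, possibly into the replacement of $\ell_0$ by $-\ell_0$), producing a linear form $\ell$ with $-\sigma \ell(\mathbf{z})^{n-2} f = \det(\sigma F(\mathbf{z}))$. The coefficient matrices $\sum_{i,j}(M_p)_{ij}C_{ij}$ of this representation are assembled from the blocks of the positive semidefinite Choi matrix $C$; in the psd-stability specialization $M_p = M^{\psdtext}_{ij}$ they reduce to $\tfrac12(C_{ij}+C_{ji})$, so the representation is exactly of the form in Proposition~\ref{pr:crit-psd-stable}.

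For the certificate claim I would argue as in Theorem~\ref{th:kstable-quadratic}, whose hypothesis~\eqref{eq:containment-crit2} is identical to~\eqref{eq:containment-cr1b}. For every $\mathbf{x}$ with $M(\mathbf{x}) \succeq 0$, the Khatri--Rao product argument from the proof of Theorem~\ref{th:kstable-determinantal} (used again in Theorem~\ref{th:kstable-quadratic}, and underlain by Proposition~\ref{pr:posmaps}) forces $\sigma F(\mathbf{x}) = \sum_{i,j}(M(\mathbf{x}))_{ij}C_{ij} \succeq 0$, so $K$ is contained in the closed hyperbolicity cone $\{\mathbf{x} : \sigma F(\mathbf{x}) \succeq 0\}$ of $f = \init(f)$. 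Since $\inter K$ is open and contained in this closure, it lies in the open hyperbolicity cone itself, which is a component of $\mathcal{I}(f)^{\compl}$; by Theorem~\ref{ThmConicStability} this is equivalent to the $K$-stability of $f$.

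The hard part will be the sign-and-scaling bookkeeping in matching $-\sigma^n \ell_0^{n-2}$ to the stated prefactor $-\sigma \ell^{n-2}$: the parity of $n$ together with the value of $\sigma$ dictates whether $\ell_0$ or a signed rescaling of it must be taken, and one has to verify that the resulting $\ell$ is a genuine real linear form. Apart from this, the statement is a direct assembly of the determinantal identity of Theorem~\ref{th:conic-comp-quadr} with the positive-map and Khatri--Rao machinery of Proposition~\ref{pr:posmaps} and Theorem~\ref{th:kstable-quadratic}.
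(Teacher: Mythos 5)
Your overall route is exactly the paper's: substitute \eqref{eq:containment-cr1b} to collapse the pencil to $\sigma F(\mathbf{z})$, take determinants, invoke $\det F(\mathbf{z}) = -((T\mathbf{z})_n)^{n-2}\init(f)$ from Theorem~\ref{th:conic-comp-quadr} (with $\init(f)=f$ by homogeneity), and get the $K$-stability from Theorem~\ref{th:kstable-quadratic}; the paper's proof consists of precisely these steps, so there is no methodological difference to report there, and your Khatri--Rao recap of the stability part is the paper's argument as well.

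The one point where you go beyond the paper is the sign bookkeeping, and there your proposed repair does not work. You are right that $\det(\sigma F(\mathbf{z})) = \sigma^n \det F(\mathbf{z}) = -\sigma^n \ell_0(\mathbf{z})^{n-2} f$ with $\ell_0(\mathbf{z}) := (T\mathbf{z})_n$; in fact you are more careful than the paper, whose proof writes $\sigma \det F(\mathbf{z})$ for this quantity, tacitly assuming $\sigma^n=\sigma$. But the residual sign $\sigma^{n-1}$ cannot be absorbed into a real rescaling or sign flip of $\ell_0$ in the only case where it is nontrivial: if $n$ is even and $\sigma=-1$, then $n-2$ is even, so $(\lambda\ell_0)^{n-2} = \lambda^{n-2}\ell_0^{n-2}$ with $\lambda^{n-2}>0$ for every real $\lambda\neq 0$, and no real linear form $\ell$ satisfies $\ell^{n-2} = -\ell_0^{n-2}$ (the two sides have opposite signs wherever nonzero; moreover, by unique factorization any admissible $\ell$ would have to be proportional to $\ell_0$, so the obstruction is intrinsic, not an artifact of your construction). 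The clean fix is a change of pencil rather than of $\ell$: since $f$ is a homogeneous quadratic, $-T$ is just as valid a choice in \eqref{eq:ft} as $T$ (it pulls the normal form back to $f$ equally well) and it replaces $F$ by $-F$; hence a certificate $(C,\sigma)$ with $\sigma=-1$ for $F$ is a certificate with $\sigma=+1$ for $-F$, so one may assume $\sigma=1$ from the outset, after which $\ell=\ell_0$ works and the displayed identity is exact. (Equivalently, the theorem's display should carry $-\sigma^n$ in place of $-\sigma$.) You flagged this as ``the hard part'' but left it unresolved, and the resolution you sketch is the one that provably does not exist; with the $T\mapsto -T$ reduction in place, your argument is complete and coincides with the paper's.
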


\begin{proof}
The $K$-stability was shown in Theorem~\ref{th:kstable-quadratic}.
By~\eqref{eq:containment-cr1b} and the definition of $F(\mathbf{z})$,
we have
\[
  \sigma \det F(\mathbf{z})  =
  \det \big( \sum_{p=1}^n z_p \sum_{i,j=1}^l (M_p)_{ij} C_{ij} \big).
\]
Since $\det F(\mathbf{z}) = - ((T \mathbf{z})_n)^{n-2} f$, 
the choice $\ell(\mathbf{z}) := (T \mathbf{z})_n$
provides the desired representation.
This provides a certificate for the $K$-stability
of $f$.
\end{proof}

\begin{corollary}\label{co:quadr-certificate}
Let $n \ge 2$ and $f(Z)$ be a homogeneous quadratic polynomial on symmetric 
$n \times n$-variables, in the linearized vector $\mathbf{z} = (z_1, \ldots, z_N)$ let
$f = \mathbf{z}^T A \mathbf{z}$ with $A \in \R^{N \times N}$ of signature $(N-1,1)$.
If $M(\mathbf{z})$ is a matrix pencil for the psd-cone and
$C$ is a block matrix satisfying~\eqref{eq:containment-cr1b}, then 
for some linear form $\ell(\mathbf{z})$ in $\mathbf{z}$, the polynomial
$-\ell(\mathbf{z})^{N-2} f$ has a determinantal representation of the
form
\[
  -\ell(\mathbf{z})^{N-2} f \ = \ \det\big( \sum_{i,j=1}^l C_{ij} z_{ij} \big)
\]
with positive semidefinite matrices $C_{ij}$.
This representation provides a certificate for the psd-stability of $f$ in
the sense of the sufficient criterion for psd-stability.
\end{corollary}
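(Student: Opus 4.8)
The plan is to derive Corollary~\ref{co:quadr-certificate} as the specialization of Theorem~\ref{th:quadr-certificate} to the case where $K$ is the positive semidefinite cone $\sym_n^+$, equipped with its canonical matrix pencil $M^{\psdtext}$ from~\eqref{eq:kpsd}. First I would verify that the hypotheses of Theorem~\ref{th:quadr-certificate} carry over under the linearization $\mathbf{z} = (z_1, \ldots, z_N)$ with $N = \binom{n+1}{2}$. The dimension condition $N \ge 3$ holds precisely because $n \ge 2$, so the role played by ``$n$'' in the theorem is taken over by $N$. Moreover, a homogeneous quadratic form $f = \mathbf{z}^T A \mathbf{z}$ with $A$ of signature $(N-1,1)$ has $\rank A = N \ge 3 > 2$ and therefore cannot split into two linear factors, so $f$ is automatically irreducible. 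Thus every hypothesis of Theorem~\ref{th:quadr-certificate} is met, and I may invoke it with the pencil $M = M^{\psdtext}$.

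Applying the theorem produces a linear form $\ell(\mathbf{z})$, a sign $\sigma \in \{-1,1\}$ and the positive semidefinite block matrix $C = (C_{ij})$ of~\eqref{eq:containment-cr1b}, together with the identity $-\sigma \ell(\mathbf{z})^{N-2} f = \det\big(\sum_{p=1}^N z_p \sum_{i,j} (M_p)_{ij} C_{ij}\big)$. The one new ingredient is to insert the explicit shape of the psd-cone pencil. Since $M^{\psdtext}(X) = (x_{ij})$ has its $(i,j)$-entry equal to the matrix variable itself, the coefficient collected at each matrix position $(i,j)$ is $(M^{\psdtext}(\mathbf{z}))_{ij} = z_{ij}$. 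Hence the inner double sum collapses, $\sum_{p=1}^N z_p \sum_{i,j}(M_p)_{ij} C_{ij} = \sum_{i,j} (M^{\psdtext}(\mathbf{z}))_{ij} C_{ij} = \sum_{i,j} z_{ij} C_{ij}$, and I obtain $-\sigma \ell(\mathbf{z})^{N-2} f = \det\big(\sum_{i,j} C_{ij} z_{ij}\big)$.

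It then remains to dispose of the sign and to read off the consequence. For $\sigma = 1$ the identity is already in the asserted form; for $\sigma = -1$ I would absorb the sign into $\ell$ (taking $-\ell$ when $N-2$ is odd, and otherwise passing the sign through the $N \times N$ determinant via $\det(-X) = (-1)^N \det X$), exactly as in the bookkeeping behind the proof of Theorem~\ref{th:quadr-certificate}. The resulting representation $\det\big(\sum_{i,j} C_{ij} z_{ij}\big)$ is precisely of the determinantal form occurring in Proposition~\ref{pr:crit-psd-stable}, with vanishing constant term and coefficient blocks assembled into the positive semidefinite matrix $C$; it therefore certifies psd-stability in the sense of that proposition, while the psd-stability of $f$ itself is already guaranteed by Theorem~\ref{th:kstable-quadratic}. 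I expect the only delicate part to be purely notational: keeping the two dimension parameters apart (the pencil size $l = n$ versus the block size $d = N$) and correctly tracking the scalar $\sigma$ against the parity of $N$ when it is moved across the determinant. There is no conceptual obstacle, since all the substance is inherited from Theorem~\ref{th:quadr-certificate}, the sole addition being the observation that $M^{\psdtext}$ has entries equal to the matrix variables themselves.
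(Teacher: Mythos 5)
Your proposal is correct and takes essentially the same route as the paper: the paper's entire proof is that the corollary ``is a consequence of Theorem~\ref{th:quadr-certificate}'', which is precisely the specialization to the pencil $M^{\psdtext}$ that you carry out. The details you supply---irreducibility being automatic from the signature $(N-1,1)$ (rank $N\ge 3$ excludes a factorization into linear forms), the collapse $\sum_{p} z_p \sum_{i,j}(M_p)_{ij}C_{ij} = \sum_{i,j} C_{ij} z_{ij}$ for the psd pencil, and the sign bookkeeping for $\sigma=-1$---are all sound and merely make explicit what the paper leaves implicit.
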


\begin{proof}
  This is a consequence of Theorem~\ref{th:quadr-certificate}.
\end{proof}

\section{Certifying $K$-stability with respect to scaled cones\label{se:scaled}}

The sufficient criterion does not capture all the cases
of $K$-stable polynomials. Here, we extend our techniques 
to scaled versions of the cone. To this end, we will 
reduce a scaled version of the $K$-stability problem 
to the situation of the following statement.

\begin{proposition}[Proposition~6.2 in \cite{ktt-2013}]\label{pr:ktt-bounded}
	Let $A(\mathbf{z})$ and $B(\mathbf{z})$ be monic linear matrix pencils of size $k \times k$ and $l \times l$, respectively, and such that $S_A:=\{\mathbf{z}\in\R^n:A(\mathbf{z})\succeq 0 \}$ is bounded. Then there exists a constant $\nu >0$ such that for the scaled spectrahedron $\nu S_A$ the inclusion $\nu S_A\subseteq S_B$ is certified by the system
	\begin{equation*}
	C = (C_{ij})_{i,j=1}^k \ \succeq \ 0,
	\quad \forall p = 1, \ldots, n \; : \;
	B_p \ = \ \sum_{i,j=1}^k \Big( \frac{1}{\nu} A_p \Big)_{ij} C_{ij}.
	\end{equation*}
\end{proposition}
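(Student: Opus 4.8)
The plan is to reduce the scaled containment to the theory of completely positive maps and their matricial (free) relaxations, and to extract the scaling factor $\nu$ from the boundedness of $S_A$. Since $A$ and $B$ are monic, we have $A(\mathbf{0}) = I_k \succ 0$ and $B(\mathbf{0}) = I_l \succ 0$, so the origin lies in the interior of both $S_A$ and $S_B$; in particular, after homogenizing the monic pencils with an auxiliary variable $z_0$, the subspace $\mathcal{U} = \myspan(I_k, A_1, \ldots, A_n)$ contains a positive definite matrix. The system in the statement is precisely the Choi matrix condition of Proposition~\ref{pr:posmaps} for this homogenized data: the coefficient of $z_p$ yields the displayed equation $B_p = \sum_{ij}(\tfrac{1}{\nu} A_p)_{ij} C_{ij}$, while the constant terms contribute the unital normalization $\sum_i C_{ii} = I_l$. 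By the equivalence $(1)\Leftrightarrow(2)$ of Proposition~\ref{pr:posmaps}, valid because $\mathcal{U}$ contains a positive definite matrix, feasibility of this system is equivalent to complete positivity of the unital map $\Phi_\nu$ determined by $\Phi_\nu(\tfrac{1}{\nu}A_p) = B_p$ and $\Phi_\nu(I_k) = I_l$; and by $(1)\Rightarrow(4)$ such feasibility already certifies $\nu S_A \subseteq S_B$. Thus it suffices to exhibit a single $\nu > 0$ for which $\Phi_\nu$ is completely positive.

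Next I would pass to the matricial relaxations (free spectrahedra)
\[
  \mathcal{D}_A = \bigcup_{d\ge 1}\{X\in(\sym_d)^n : I_k\otimes I_d + \textstyle\sum_{p} A_p\otimes X_p \succeq 0\}
\]
and the analogous $\mathcal{D}_B$, using that complete positivity of $\Phi_\nu$ is equivalent to the matricial inclusion $\nu\,\mathcal{D}_A \subseteq \mathcal{D}_B$. Two structural facts drive the argument. First, because $B$ is monic, $\mathcal{D}_B$ contains a uniform matricial neighbourhood of the origin: if $\|X_p\|\le \delta$ for all $p$ with $\delta < (\sum_p \|B_p\|)^{-1}$, then $I_l\otimes I_d + \sum_p B_p\otimes X_p \succeq (1-\delta\sum_p\|B_p\|)\, I_{ld} \succ 0$, so every such tuple lies in $\mathcal{D}_B$ at every level $d$. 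Second, boundedness of the scalar spectrahedron $S_A$ forces $\mathcal{D}_A$ to be bounded by a radius $R$ that is independent of the level: this is governed by the triviality of the recession cone $\{\mathbf{z} : \sum_p A_p z_p \succeq 0\}$, which is inherited at every matricial level, yielding $\|X_p\|\le R$ for all $X\in\mathcal{D}_A$ and all $d$.

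With these facts in hand I would simply choose $\nu$ with $\nu R < \delta$, say $\nu := \delta/(2R)$. Then $\nu\,\mathcal{D}_A$ lies inside the matricial $\delta$-neighbourhood of the origin and hence inside $\mathcal{D}_B$ at every level, so $\Phi_\nu$ is completely positive, its Choi matrix $C$ solves the displayed system, and the inclusion restricts at level $d=1$ to $\nu S_A \subseteq S_B$.

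The step I expect to be the main obstacle is the uniform bound in the second structural fact: transferring boundedness of the scalar set $S_A$ to a \emph{level-independent} radius bound on the free spectrahedron $\mathcal{D}_A$, together with the precise equivalence between complete positivity of $\Phi_\nu$ and the matricial inclusion. The neighbourhood estimate for $\mathcal{D}_B$ and the final choice of $\nu$ are routine; the genuine work lies in controlling all matricial levels simultaneously, which is exactly the point where merely scalar containment — giving only positivity, i.e.\ properties $(3)/(4)$ of Proposition~\ref{pr:posmaps} — must be upgraded to complete positivity.
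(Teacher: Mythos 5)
First, a point of order: the paper itself contains no proof of this proposition --- it is imported verbatim from \cite{ktt-2013} (Proposition~6.2 there), so your attempt can only be compared against the proof in that source. Your outline is in fact the same argument used there: reduce the Choi-matrix system to complete positivity of the unital map $\Phi_\nu$ (this is the $(1)\Leftrightarrow(2)$ part of Proposition~\ref{pr:posmaps}, valid since the homogenized span contains $I_k$), identify complete positivity with inclusion of the matricial (free) spectrahedra $\nu\,\mathcal{D}_A \subseteq \mathcal{D}_B$, and produce $\nu$ from two facts: $\mathcal{D}_B$ contains a uniform free neighbourhood of the origin because $B$ is monic, and $\mathcal{D}_A$ is bounded uniformly in the level because $S_A$ is bounded. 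Two remarks on the substance. The step you flag as the main obstacle is actually a one-line compression argument, and your proposed justification (triviality of the recession cone at every level) only gives boundedness level by level, not a level-independent radius: if $X \in \mathcal{D}_A$ at level $d$ and $v \in \R^d$ is a unit vector, then compressing $I_k \otimes I_d + \sum_p A_p \otimes X_p \succeq 0$ by $I_k \otimes v$ gives $I_k + \sum_p A_p\,(v^T X_p v) \succeq 0$, i.e.\ $(v^T X_1 v, \ldots, v^T X_n v) \in S_A$; hence $\|X_p\| = \max_{\|v\|=1} |v^T X_p v| \le R$, where $R$ is the radius of $S_A$ --- the same bound at every level. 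Second, the equivalence between complete positivity of $\Phi_\nu$ and the free inclusion is \emph{not} contained in Proposition~\ref{pr:posmaps}; the direction you need, ``inclusion implies completely positive,'' is a genuine theorem of Helton--Klep--McCullough (whose proof requires boundedness of $\mathcal{D}_A$, which you have), and it must be invoked as an external citation, exactly as \cite{ktt-2013} does.

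Your reading of the certificate, with the unital normalization $\sum_i C_{ii} = I_l$ supplied by the constant terms, is not only permissible but necessary, and it matches the system as it actually appears in \cite{ktt-2013}: the display as quoted in the present paper omits the constant-term condition, and without some bound of the form $\sum_i C_{ii} \preceq I_l$ the remaining equations do not certify containment at all. (Concretely, with $A_1 = \mathrm{diag}(1,-1)$, $B_1 = 5$, $\nu = 1$, the matrix $C = \mathrm{diag}(5,0) \succeq 0$ satisfies the displayed equations, yet $S_A = [-1,1] \not\subseteq S_B = [-1/5,\infty)$; the Khatri--Rao estimate only yields $B(\mathbf{x}) \succeq I_l - \sum_i C_{ii}$.) One last technicality you should make explicit: well-definedness of $\Phi_\nu$ on the span requires $\{I_k, A_1, \ldots, A_n\}$ to be linearly independent, but this is automatic here, since any nontrivial relation $c_0 I_k + \sum_p c_p A_p = 0$ would make $(c_p)_p$ a translation-invariance or recession direction of $S_A$, contradicting boundedness.
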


As before, let $K$ be a proper cone which is given by a linear matrix pencil
$M(\mathbf{z}) = \sum_{j=1}^n M_j z_j$ with $l \times l$-matrices,
and assume that there exists a hyperplane 
$H$ not passing through the origin and such that $K \cap H$ is bounded. For 
notational convenience, assume that
$H = \{(x_1, \ldots, x_n) \in \R^{n} \, : \, x_1 = 1\}$ and
that $M_1 = I_n$. In particular, then the first unit vector 
$\mathbf{e}^{(1)}$ is contained in the interior of the full-dimensional
cone $K$.

\begin{theorem}\label{th:nu-cont}
Let $f \in \R[\mathbf{z}]$ and $M(\mathbf{z})$ be as described before.
Let $N(\mathbf{z})$ be the matrix pencil of a spectrahedral, conic 
set contained in $\cl (\mathcal{I}(f)^{\compl})$, and assume that 
$N_1 = I_n$ as well.

Then there exists a constant $\nu > 0$ such that 
$g_\nu(z_1, \ldots, z_n) := f(z_1, \nu z_2, \ldots, \nu z_n)$
is $K$-stable and such that the $K$-stability of $g$ is certified by the
system
\begin{equation}\label{eq:c-system}
  C = (C_{ij})_{i,j=1}^l \succeq 0,
  \quad \forall p = 1, \ldots, n \; : \;
  \nu N_p \ = \ \sum_{i,j=1}^l \left( M_p\right)_{ij} C_{ij},
\end{equation}
where the variable matrix $C$ is a block matrix with $l \times l$ blocks.

As a consequence, $f$ is $\hat{K}$-stable with respect to
$\hat{K} = \cone (\{1\} \times \nu(K \cap H))$, 
where the multiplication of $\nu$ with the set
$K \cap H$ is done in the $(n-1)$-dimensional space with variables 
$\mathbf{z}' = (z_2, \ldots, z_n)$ and $\cone$ denotes the conic hull.
\end{theorem}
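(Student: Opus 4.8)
The plan is to reduce the conic statement to the affine containment result Proposition~\ref{pr:ktt-bounded} by slicing $K$ and $S_N := \{\mathbf{z} : N(\mathbf{z}) \succeq 0\}$ with the hyperplane $H$, and then to transfer the resulting scaled containment back to a statement about cones. Write $\mathbf{z} = (z_1, \mathbf{z}')$ with $\mathbf{z}' = (z_2, \ldots, z_n)$ and introduce the dehomogenized, monic pencils $\tilde M(\mathbf{z}') := M(1, \mathbf{z}') = I + \sum_{p=2}^n M_p z_p$ and $\tilde N(\mathbf{z}') := N(1, \mathbf{z}') = I + \sum_{p=2}^n N_p z_p$. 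By construction $S_{\tilde M} = K \cap H$ in the coordinates $\mathbf{z}'$, which is bounded by hypothesis, and $S_{\tilde N} = S_N \cap H$. Since $\mathbf{e}^{(1)} \in \inter K$ and $K \cap H$ is bounded, $K$ can contain no nonzero ray in $\{z_1 = 0\}$ and (using pointedness) satisfies $K \setminus \{0\} \subseteq \{z_1 > 0\}$, so $K = \cone(K \cap H)$; likewise $\cone(S_N \cap H) \subseteq S_N$ because $S_N$ is a convex cone.

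First I would apply Proposition~\ref{pr:ktt-bounded} to the monic pencils $\tilde M$ (bounded) and $\tilde N$, with its ambient variables being $z_2, \ldots, z_n$. This yields a scaling factor $\nu > 0$ and a positive semidefinite Choi matrix $C$ certifying $\nu S_{\tilde M} \subseteq S_{\tilde N}$ via the system matching the coefficients of $\tfrac{1}{\nu}\tilde M$ to those of $\tilde N$. Reading these coefficient equations off block by block gives exactly the linear conditions $\nu N_p = \sum_{i,j}(M_p)_{ij} C_{ij}$ for $p \ge 2$, together with the condition coming from the constant (monic) term; collecting them is precisely the system~\eqref{eq:c-system} with the variable $C$ of the stated block format. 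Taking conic hulls of $\nu S_{\tilde M} \subseteq S_{\tilde N} \subseteq S_N$ and invoking $K = \cone(K\cap H)$ upgrades the affine inclusion to the conic inclusion $\hat K \subseteq S_N$, where $\hat K = \cone(\{1\} \times \nu(K \cap H))$.

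It remains to translate $\hat K \subseteq S_N$ into the two asserted conclusions. Because $S_N \subseteq \cl(\mathcal{I}(f)^\compl)$ by hypothesis, the inclusion $\hat K \subseteq S_N$ gives $\inter \hat K \cap \mathcal{I}(f) = \emptyset$, i.e. $f$ is $\hat K$-stable; this is the final, ``as a consequence'' claim. For the equivalent statement about $g_\nu$, set $D = \diag(1, \nu, \ldots, \nu)$, so that $g_\nu = f \circ D$ and, by Proposition~\ref{pr:transf2}, $\mathcal{I}(g_\nu) = D^{-1}\mathcal{I}(f)$. Hence $\inter K \cap \mathcal{I}(g_\nu) = \emptyset$ is equivalent to $\inter(DK) \cap \mathcal{I}(f) = \emptyset$, and since $D$ rescales the cross-section $K \cap H$ by $\nu$ in the $\mathbf{z}'$-directions while fixing $z_1$, we have $DK = \hat K$. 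Thus $g_\nu$ is $K$-stable, certified by the same matrix $C$ through~\eqref{eq:c-system}.

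The routine steps are the Khatri--Rao / positive-map passage (already carried out in the proof of Theorem~\ref{th:kstable-determinantal}) showing that the feasibility of~\eqref{eq:c-system} indeed certifies the relevant spectrahedral containment, and the geometric lemma $K = \cone(K\cap H)$ that makes the dehomogenization reversible. The main obstacle I expect is the careful bookkeeping between the affine (monic) world of Proposition~\ref{pr:ktt-bounded} and the homogeneous pencils $M, N$: one must verify that only the directions $z_2, \ldots, z_n$ are rescaled while the distinguished coordinate $z_1$ is left fixed, and that the Choi matrix produced by Proposition~\ref{pr:ktt-bounded} transfers to the block system~\eqref{eq:c-system}, with the equation indexed by $p=1$ corresponding to the monic normalization of the constant term rather than to a further scaling. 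Keeping this normalization straight is the delicate point, since a scaled constant term would instead certify the (too strong) inclusion $K \subseteq S_N$.
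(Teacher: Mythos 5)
Your proposal is correct and takes essentially the same route as the paper's proof: dehomogenize $M$ and $N$ along the hyperplane $H$ to obtain monic pencils, invoke Proposition~\ref{pr:ktt-bounded} to produce $\nu$ and the Choi matrix $C$, and lift the affine inclusion back to the conic setting (you spell out $K=\cone(K\cap H)$ and use Proposition~\ref{pr:transf2} with $D=\diag(1,\nu,\ldots,\nu)$ for the $g_\nu$-statement, steps the paper leaves implicit).

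One point deserves emphasis: your closing caveat about the $p=1$ equation is not only justified, it is a spot where you are more careful than the paper itself. In the paper's proof, the passage from~\eqref{eq:c-system2} to~\eqref{eq:c-system} places the factor $\nu$ on the constant-term equation as well, giving $\nu I = \sum_{i} C_{ii}$; but then the entire system is invariant under the rescaling $(C,\nu)\mapsto(\lambda C,\lambda \nu)$, so feasibility for some $\nu>0$ would be equivalent to feasibility for $\nu=1$, i.e.\ to the unscaled certificate of $K\subseteq S_N$ --- exactly the ``too strong'' conclusion you warn against, and one which the counterexample remark following Theorem~\ref{th:kstable-determinantal} shows cannot hold in general. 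Your normalization --- the monic condition $\sum_i C_{ii}=I$ for $p=1$, with the scaling $\nu$ appearing only in the equations for $p\ge 2$ --- is the reading under which the theorem is non-vacuous and under which the reduction to Proposition~\ref{pr:ktt-bounded} is faithful.
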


Since the scaling variable $\nu$ occurs linearly in~\eqref{eq:c-system},
its optimal value can be expressed by a semidefinite program.
Further note that the preconditions $M_1 = I_n$ and $N_1 = I_n$ imply that 
the induced matrix pencils of the conic spectrahedra of $M(\mathbf{z})$ and
of $N(\mathbf{z})$ give monic pencils within the hyperplane $H$.

\begin{proof}
Let $N'(\mathbf{z}'), M'(\mathbf{z}')$ be the matrix pencils in the $n-1$ variables 
$\mathbf{z}'=(z_2, \ldots, z_n)$
defined by
\[
  N'(\mathbf{z}') = N(\mathbf{z}) \Big|_{z_1 = 1} \text{ and }
  M'(\mathbf{z}') = M(\mathbf{z}) \Big|_{z_1 = 1}.
\]
$N'(\mathbf{z}')$ and $M'(\mathbf{z}')$ are monic linear matrix pencils
and the spectrahedron
$S_{M'(\mathbf{z}')} = \{\mathbf{z}' = (z_2, \ldots, z_n) \in 
\R^n \, : \, M'(\mathbf{z}') \succeq 0\}$ is bounded.
By Proposition~\ref{pr:ktt-bounded}, the inclusion
$ \nu S_{M'(\mathbf{z}')} \subseteq S_{L'(\mathbf{z}')}$ is certified by the system
\begin{equation}\label{eq:c-system2}
  C = (C_{ij})_{i,j=1}^l \ \succeq \ 0,
  \quad \forall p = 1, \ldots, n \; : \;
  N'_p \ = \ \sum_{i,j=1}^l \Big( \frac{1}{\nu} M'_p \Big)_{ij} C_{ij}
\end{equation}
with some block matrix $C =  (C_{ij})_{i,j=1}^l$. Since $M_p' = M_p$
and $N_p' = N_p$ for $p \ge 1$,
this is equivalent to~\eqref{eq:c-system}.

Moreover, $\nu S_{M'(\mathbf{z}')} \subseteq S_{N'(\mathbf{z}')}$ implies that
$ \nu S_{M(\mathbf{z})} \subseteq S_{N(\mathbf{z})}$ and also that
for any $\mathbf{z}$ with $z_1 = 1$ and
$f(\mathbf{z}) = 0$, we have 
$(1, \frac{z_2}{\nu}, \ldots, \frac{z_n}{\nu}) \not\in \inter S_{M'(\mathbf{z}')}$, or,
equivalently, $g_{\nu}(\mathbf{z})$ is $K$-stable. Finally, this also 
gives the reformulation that $f$ is $\hat{K}$-stable.
\end{proof}

Theorem~\ref{th:nu-cont} can also be applied to such polynomials $f$ which meet the requirements of the theorem after applying a invertible linear transformation, since those preserve the containment of sets.

\begin{example}
     Setting $\begin{pmatrix} z_1 & z_2 \\ z_2 & z_3 \end{pmatrix}
     = \begin{pmatrix} z_{11} & z_{12} \\ z_{12} & z_{22} \end{pmatrix}$,
     the polynomial $f=\det \begin{pmatrix} z_1 & 2 z_2 \\ 2 z_2 & z_3 \end{pmatrix} $ $= z_1z_3-4z_2^2$ is not psd-stable. To fit the requirements of Theorem~\ref{th:nu-cont},
     let $Q$ be the rotation matrix 
     $Q=\frac{1}{\sqrt{2}}\left(\begin{matrix}
	1 & 0 & 1 \\
	0 & \sqrt{2} & 0 \\
	-1 & 0 & 1
	\end{matrix}\right)$ and consider the rotated versions of the underlying matrix pencils
	\begin{eqnarray*} 
		N_Q(\mathbf{y}) & = & N(Q^{-1} \mathbf{z}) \ = \ 
		\frac{1}{\sqrt{2}}\left(\begin{matrix} y_1-y_3 & \sqrt{8}y_2 \\
			\sqrt{8}y_2 & y_1+y_3
		\end{matrix}\right) \\
		\text{ and } M_Q(\mathbf{y}) & = & M(Q^{-1} \mathbf{z}) \ = \
		\frac{1}{\sqrt{2}}\left(
		\begin{matrix}
			y_1-y_3 & \sqrt{2}y_2 \\
			\sqrt{2}y_2 & y_1+y_3
		\end{matrix}
		\right).
	\end{eqnarray*}
	For $N_{Q,\nu}(\mathbf{y}):=N_Q(y_1,\nu y_2,\nu y_3)$ and $M_Q(\mathbf{y})$,
	 $\left( \ref{eq:c-system} \right)$ leads to the equations
	\begin{eqnarray*} 
		C_{11} + C_{22} & = & \left(\begin{matrix}
			1 & 0 \\
			0 & 1
		\end{matrix}\right), \quad
		C_{12} + C_{21} \ = \ \left(\begin{matrix}
			0 & 2\nu \\
			2\nu & 0
		\end{matrix}\right), \quad
		-C_{11} + C_{22} = \left(\begin{matrix}
			-\nu & 0\\
			0 & \nu
		\end{matrix}\right).
    \end{eqnarray*}
    Hence, the set of matrices $C = C_{\nu}$ satisfying~\eqref{eq:c-system} is given by
    the system
    \begin{equation}
    \label{eq:cnu}
    C \ = \ \frac{1}{2}\left(
    \begin{matrix}
    1+\nu & 0 & 0 & 4\lambda \nu \\
    0 & 1-\nu & 4(1-\lambda) \nu & 0 \\
    0 & 4(1-\lambda) \nu & 1-\nu & 0 \\
    4 \lambda \nu & 0 & 0 & 1+\nu
    \end{matrix}
    \right), \quad C \succeq 0 \quad \text{with $\lambda \in \R$.}
    \end{equation}
    The largest $\nu$ satisfying~\eqref{eq:cnu} is given by $\nu=\frac{1}{2}$ with 
    $\lambda =\frac{3}{4}$. When rotating back, this certifies the psd-stability of
    \begin{equation*}
    f_\frac{1}{2}(\mathbf{z}) \ := \ \det \left(N_{Q,\frac{1}{2}}(Q\mathbf{z})\right)
    \ = \ \frac{1}{16}\cdot(3z_1^2+10z_1z_3+3z_3^2-16z_2^2).
    \end{equation*}
    In addition to that, we obtain that $f$ is $\hat{K}$-stable with respect 
    to the cone
    \begin{equation*}
    \hat{K} \ = \ \left\{\mathbf{y}\in\R^3 \ : \ \frac{1}{2}\left(
    \begin{matrix}
    3y_1-y_3 & 4y_2 \\
    4y_2 & -y_1+3y_3
    \end{matrix}
    \right)\succeq 0\right\}.
    \end{equation*}
\end{example}

\section{Conclusion and open questions\label{se:conclusion}}

In this paper, we have shown how techniques from the theory of positive
maps and from the containment of spectrahedra can be used to provide
a sufficient criterion for the $K$-stability of a given polynomial $f$.
In particular, we have considered quadratic and determinantal polynomials.
Beyond that, our approach generally applies whenever (for a polynomial of 
arbitrary degree) some spectrahedral components in the complement of 
$\mathcal{I}(f)$ are known. 

It would be interesting to understand whether this 
or related techniques can be effectively exploited also for classes of 
polynomials beyond the ones studied in the paper. In particular, with
regard to the recent development of a theory of Lorentzian 
polynomials \cite{lorentzian-braenden-huh}, which provides a superset
of the set of homogeneous stable polynomials, it would be of interest
to understand the connection of Lorentzian polynomials to conic
stability and to the effective methods presented in our paper.

\bibliography{bibpsdstable}

\begin{thebibliography}{10}

\bibitem{berger-book}
M.~Berger.
\newblock {\em Geometry. {I + II}}.
\newblock Universitext. Springer-Verlag, Berlin, 1987.

\bibitem{borcea-braenden-2008}
J.~Borcea and P.~Br\"{a}nd\'{e}n.
\newblock Applications of stable polynomials to mixed determinants: {J}ohnson's
  conjectures, unimodality, and symmetrized {F}ischer products.
\newblock {\em Duke Math. J.}, 143(2):205--223, 2008.

\bibitem{borcea-braenden-leeyang1}
J.~Borcea and P.~Br{\"a}nd{\'e}n.
\newblock The {L}ee-{Y}ang and {P}{\'o}lya-schur programs. {I}. {L}inear
  operators preserving stability.
\newblock {\em Invent.\ Math.}, 177(3):541, 2009.

\bibitem{borcea-braenden-2010}
J.~Borcea and P.~Br\"{a}nd\'{e}n.
\newblock Multivariate {P}\'{o}lya-{S}chur classification in the {W}eyl
  algebra.
\newblock {\em Proc.\ London Math.\ Soc.}, 101:73--104, 2010.

\bibitem{bbl-2009}
J.~Borcea, P.~Br{\"a}nd{\'e}n, and T.~Liggett.
\newblock Negative dependence and the geometry of polynomials.
\newblock {\em J.\ Amer.\ Math.\ Soc.}, 22(2):521--567, 2009.

\bibitem{braenden-hpp}
P.~Br\"{a}nd\'{e}n.
\newblock Polynomials with the half-plane property and matroid theory.
\newblock {\em Adv. Math.}, 216:302--320, 2007.

\bibitem{braenden-2011}
P.~Br{\"a}nd{\'e}n.
\newblock Obstructions to determinantal representability.
\newblock {\em Adv.\ Math.}, 226(2):1202--1212, 2011.

\bibitem{lorentzian-braenden-huh}
P.~Br\"{a}nd\'{e}n and J.~Huh.
\newblock Lorentzian polynomials.
\newblock {\sf https://arxiv.org/abs/1902.03719}, 2019.

\bibitem{bcs-book}
P.~B{\"u}rgisser, M.~Clausen, and M.~A. Shokrollahi.
\newblock {\em Algebraic Complexity Theory}.
\newblock Springer, Berlin, 1997.

\bibitem{dey-pillai-2018}
P.~Dey and H.~K. Pillai.
\newblock A complete characterization of determinantal quadratic polynomials.
\newblock {\em Linear Algebra Appl.}, 543:106--124, 2018.

\bibitem{garding-59}
L.~G{\aa}rding.
\newblock {An inequality for hyperbolic polynomials}.
\newblock {\em J. Math. Mech.}, 8:957--965, 1959.

\bibitem{goemans-williamson-2004}
M.~X. Goemans and D.~P. Williamson.
\newblock Approximation algorithms for max-3-cut and other problems via complex
  semidefinite programming.
\newblock {\em J.\ Computer \& System Sciences}, 68(2):442--470, 2004.

\bibitem{gkv-2016}
A.~Grinshpan, D.~S. Kaliuzhnyi-Verbovetskyi, V.~Vinnikov, and H.~J. Woerdeman.
\newblock Contractive determinantal representations of stable polynomials on a
  matrix polyball.
\newblock {\em Math.\ Zeitschrift}, 283(1-2):25--37, 2016.

\bibitem{gkv-2017}
A.~Grinshpan, D.~S. Kaliuzhnyi-Verbovetskyi, V.~Vinnikov, and H.~J. Woerdeman.
\newblock Rational inner functions on a square-matrix polyball.
\newblock In {\em Harmonic Analysis, Partial Differential Equations, Banach
  Spaces, and Operator Theory (vol.~2)}, pages 267--277. Springer, Cham, 2017.

\bibitem{lorentzian-hehl-itin}
F.W. Hehl and Y.~Itin.
\newblock Is the {L}orentzian signature of the metric of spacetime
  electromagnetic in origin?
\newblock {\em Ann. of Physics}, 312:60--83, 2004.

\bibitem{hkm-2010}
J.~W. Helton, I.~Klep, and S.~McCullough.
\newblock The matricial relaxation of a linear matrix inequality.
\newblock {\em Math.\ Program.}, 138(1-2, Ser. A):401--445, 2013.

\bibitem{hmv-2006}
J.~W. Helton, S.A. McCullough, and V.~Vinnikov.
\newblock Noncommutative convexity arises from linear matrix inequalities.
\newblock {\em J. Funct. Anal.}, 240(1):105--191, 2006.

\bibitem{helton-vinnikov-2007}
J.~W. Helton and V.~Vinnikov.
\newblock Linear matrix inequality representation of sets.
\newblock {\em Comm. Pure Appl. Math.}, 60:654--674, 2007.

\bibitem{hoermander-1990}
L.~H\"ormander.
\newblock {\em An Introduction to Complex Analysis in Several Variables}.
\newblock North-Holland, 3rd edition, 1990.

\bibitem{joergens-theobald-conic}
T.~J\"{o}rgens and T.~Theobald.
\newblock Conic stability of polynomials.
\newblock {\em Res. Math. Sci.}, 5(2):Paper No. 26, 2018.

\bibitem{joergens-theobald-hyperbolicity}
T.~J{\"o}rgens and T.~Theobald.
\newblock Hyperbolicity cones and imaginary projections.
\newblock {\em Proc.\ Amer.\ Math.\ Soc.}, 146:4105--4116, 2018.

\bibitem{jtw-2019}
T.~J\"{o}rgens, T.~Theobald, and T.~de~Wolff.
\newblock Imaginary projections of polynomials.
\newblock {\em J.\ Symb.\ Comp.}, 91:181--199, 2019.

\bibitem{ktt-2013}
K.~Kellner, T.~Theobald, and C.~Trabandt.
\newblock {Containment problems for polytopes and spectrahedra}.
\newblock {\em SIAM J. Optim.}, 23(2):1000--1020, 2013.

\bibitem{ktt-2015}
K.~Kellner, T.~Theobald, and C.~Trabandt.
\newblock A semidefinite hierarchy for containment of spectrahedra.
\newblock {\em SIAM J. Optim.}, 25(2):1013--1033, 2015.

\bibitem{kummer-2019}
M.~D. Kummer.
\newblock On the connectivity of the hyperbolicity region of irreducible
  polynomials.
\newblock {\em Adv. Geom.}, 19(2):231--233, 2019.

\bibitem{lpr-2005}
A.~S. Lewis, P.~A. Parrilo, and M.~V. Ramana.
\newblock The {L}ax conjecture is true.
\newblock {\em Proc. Amer. Math. Soc.}, 133:2495--2499, 2005.

\bibitem{liu1999}
S.~Liu.
\newblock Matrix results on the {K}hatri-{R}ao and {T}racy-{S}ingh products.
\newblock {\em Linear Algebra Appl.}, 289(1):267 -- 277, 1999.

\bibitem{mss-interlacing1}
A.~W. Marcus, D.~A. Spielman, and N.~Srivastava.
\newblock Interlacing families {I}: {B}ipartite {R}amanujan graphs of all
  degrees.
\newblock {\em Ann.\ Math.}, 182(1):307--325, 2015.

\bibitem{mss-interlacing2}
A.~W. Marcus, D.~A. Spielman, and N.~Srivastava.
\newblock Interlacing families {II}: {M}ixed characteristic polynomials and the
  {K}adison-{S}inger problem.
\newblock {\em Ann.\ Math.}, 182(1):327--350, 2015.

\bibitem{netzer-thom-2012}
T.~Netzer and A.~Thom.
\newblock Polynomials with and without determinantal representations.
\newblock {\em Linear Algebra Appl.}, 437(7):1579--1595, 2012.

\bibitem{paulsen-2003}
V.~Paulsen.
\newblock {\em {Completely Bounded Maps and Operator Algebras}}.
\newblock Cambridge University Press, 2003.

\bibitem{pemantle-2012}
R.~Pemantle.
\newblock Hyperbolicity and stable polynomials in combinatorics and
  probability.
\newblock In D.~Jerison, B.~Mazur, and T.~Mrowka et~al., editors, {\em Current
  Development in Mathematics 2011}, pages 57--124. Int.\ Press, Somerville, MA,
  2012.

\bibitem{piaetetski-shapiro-1969}
I.~I. Piatetski-Shapiro.
\newblock {\em Automorphic Functions and the Geometry of Classical Domains}.
\newblock Gordon and Breach, New York, 1969.

\bibitem{quarez-2012}
R.~Quarez.
\newblock Symmetric determinantal representation of polynomials.
\newblock {\em Linear Algebra Appl.}, 436(9):3642--3660, 2012.

\bibitem{stein-weiss-1971}
E.~M. Stein and G.~Weiss.
\newblock {\em Introduction to Fourier Analysis on Euclidean Spaces}.
\newblock Princeton University Press, 1971.

\bibitem{straszak-vishnoi-2017}
D.~Straszak and N.~K. Vishnoi.
\newblock Real stable polynomials and matroids: {O}ptimization and counting.
\newblock In {\em Proc.\ Symp.\ Theory of Computing, Montreal}. ACM, 2017.

\bibitem{valiant-1979}
L.~G Valiant.
\newblock Completeness classes in algebra.
\newblock In {\em Proc.\ 11th ACM Symposium on Theory of Computing}, pages
  249--261, 1979.

\bibitem{van-der-geer-2008}
G.~van~der Geer.
\newblock Siegel modular forms and their applications.
\newblock In {\em The 1-2-3 of modular forms}, Universitext, pages 181--245.
  Springer, Berlin, 2008.

\bibitem{vandenberghe-boyd-survey}
L.~Vandenberghe and S.~Boyd.
\newblock Semidefinite programming.
\newblock {\em SIAM Review}, 38(1):49--95, 1996.

\bibitem{volcic-2019}
J.~Vol{\v{c}}i{\v{c}}.
\newblock Stable noncommutative polynomials and their determinantal
  representations.
\newblock {\em SIAM J.\ Appl.\ Algebra \& Geometry}, 3(1):152--171, 2019.

\bibitem{wagner-2010}
D.~G. Wagner.
\newblock Multivariate stable polynomials: Theory and applications.
\newblock {\em Bull. Amer. Math. Soc.}, 48:53--84, 2011.

\end{thebibliography}
\bibliographystyle{plain}

\end{document}